\newif\iffinal
\LetLtxMacro\todonotestodo\todo
\renewcommand{\todo}[2][]{\todonotestodo[backgroundcolor=yellow, #1]{TODO: {#2}}}
\newcommand{\attention}[2][]{\todonotestodo[backgroundcolor=red, #1]{TODO: {#2}}}
\iffinal\renewcommand{\todo}[2][]{}\renewcommand{\attention}[2][]{}\fi
\renewcommand{\gitMark}{\jobname\,\textbullet{}\,\gitFirstTagDescribe\,\textbullet{}\,\gitAuthorName,\,\gitAuthorIsoDate}
\newcommand{\F}{\mathbb{F}}
\newcommand{\smat}[4]{\big[\begin{smallmatrix} #1 & #2 \\ #3 & #4 \end{smallmatrix}\big]}
\newcommand{\smatb}[4]{\Big[\begin{smallmatrix} #1 & #2 \\ #3 & #4 \end{smallmatrix}\Big]}
\newcommand{\longmid}{\,\,\middle\vert\,\,}
\newcommand{\mcB}{\mathcal{B}}
\newcommand{\mcC}{\mathcal{C}}
\newcommand{\mcD}{\mathcal{D}}
\newcommand{\mcL}{\mathcal{L}}
\newcommand{\mcU}{\mathcal{U}}
\newcommand{\msP}{\mathscr{P}}
\DeclareMathOperator{\cent}{cent}
\DeclareMathOperator{\GL2}{GL_2(\F_q)}
\DeclareMathOperator{\Int}{Int}
\DeclareMathOperator{\tr}{tr}
\newcommand{\mat}[1]{M_2(\F_{#1})}
\newcommand{\matFq}{\mat{q}}
\newcommand{\arxivid}{ (\url{https://arxiv.org/abs/2405.04106})}
\newcolumntype{C}[1]{>{\centering\let\newline\\\arraybackslash\hspace{0pt}}m{#1}}
\newtheorem*{rep@theorem}{\rep@title}
\newcommand{\newreptheorem}[2]{%
\newenvironment{rep#1}[1]{%
 \def\rep@title{#2 \ref{##1}}%
 \begin{rep@theorem}}%
 {\end{rep@theorem}}}
   \def\MR#1{}
\newtheorem{theorem}{Theorem}
\newtheorem{proposition}{Proposition}[section]
\newtheorem{corollary}[proposition]{Corollary}
\newtheorem{lemma}[proposition]{Lemma}
\theoremstyle{definition}
\newtheorem{definition}[proposition]{Definition}
\newtheorem{example}[proposition]{Example}
\newtheorem{fact}[proposition]{Fact}
\theoremstyle{remark}
\newtheorem{remark}[proposition]{Remark}
\newtheorem*{rem*}{Remark}
\numberwithin{equation}{section}
\author{Roswitha Rissner}
\address{Department of Mathematics\\University of Klagenfurt\\
  Universitätsstraße 65-67\\9020 Klagenfurt am Wörthersee\\Austria}
\email{\href{mailto:roswitha.rissner@aau.at}{roswitha.rissner@aau.at}}
\thanks{This research was funded in part by the Austrian Science
  Fund~(FWF)~[10.55776/DOC78]. For open access purposes, the authors
  have applied a CC~BY public copyright license to any author-accepted
  manuscript version arising from this submission.}
\author{Nicholas J.~Werner}
\address{Department of Mathematics\\ Computer and Information Science \\
  SUNY at Old Westbury\\  Old Westbury\\NY 11568\\USA}
\email{\href{mailto:wernern@oldwestbury.edu}{wernern@oldwestbury.edu}}
\title[%
  \textit{C\MakeLowercase{ounting core sets}}
]{%
  Counting core sets in matrix rings over finite fields}
\keywords{Null ideal, matrix, integer-valued polynomial}
\subjclass[2020]{16S50, 15A15, 15B33, 13F20}
\begin{document}

\begin{abstract}
Let $R$ be a commutative ring and $M_n(R)$ be the ring of $n \times n$
matrices with entries from $R$. For each $S \subseteq M_n(R)$, we
consider its (generalized) null ideal $N(S)$, which is the set of all
polynomials $f$ with coefficients from $M_n(R)$ with the property that
$f(A) = 0$ for all $A \in S$. The set $S$ is said to be core if $N(S)$
is a two-sided ideal of $M_n(R)[x]$. It is not known how common core
sets are among all subsets of $M_n(R)$. We study this problem for
$2 \times 2$ matrices over $\F_q$, where $\F_q$ is the finite field
with $q$ elements. We provide exact counts for the number of core
subsets of each similarity class of $M_2(\F_q)$. While not every
subset of $M_2(\F_q)$ is core, we prove that as $q \to \infty$, the
probability that a subset of $M_2(\F_q)$ is core approaches 1. Thus,
asymptotically in~$q$, almost all subsets of $M_2(\F_q)$ are core.


\end{abstract}

\maketitle

\section{Introduction}
\label{section:introduction}
It is well known that an element $a$ of a commutative ring $R$ is a
root of a polynomial $f\in R[x]$ if and only if $x-a$ divides
$f$. However, this need not hold when $R$ is noncommutative. Given
polynomials $f, g \in R[x]$, let $fg$ denote their product in
$R[x]$. If $R$ is noncommutative, then evaluation at a ring element
$a \in R$ need not define a multiplicative map $R[x] \to R$; that is,
it may happen that $(fg)(a) \neq f(a)g(a)$. For example, let $K$ be a
field and let $M_n(K)$ be the ring of $n \times n$ matrices with
entries from $K$. Take $A$ and $B$ to be two non-commuting matrices in
$M_n(K)$ and set $f(x)=x-A$ and $g(x)=x-B$. Then,
$(fg)(x) = x^2-(A+B)x+AB$, and $(fg)(A) = -BA + AB \neq 0$, but
$f(A)g(A) = 0$. Observe that this is an example for a polynomial that
is divisible by $x-A$ but does not vanish on $A$. The characterization
of the polynomials that vanish on an element or a set of elements of a
noncommutative ring, therefore, poses a challenge.

We study the set of polynomials with matrix coefficients that vanish
on a set of square matrices. Let $R$ be a commutative ring. Given
$S \subseteq M_n(R)$, the \emph{null ideal} of $S \subseteq M_n(R)$ is
defined as
\begin{equation}\label{eq:general-null-ideal}
 N(S)=  \{f\in M_n(R)[x] \mid f(A) = 0 \text{ for all } A \in S\}.
\end{equation}
Throughout, we assume that the variable $x$ commutes with all elements
in $M_n(R)$ and that polynomials are evaluated from the right, as is a
common convention in the literature
(cf.~\cite[§16]{Lam:2001:non-comm-rings}). It is easily verified that
$N(S)$ is a left ideal of
$M_n(R)[x]$~(\cite[Proposition~3.1(1)]{Werner:2022:null-ideals}).  It
may, however, fail to be a right ideal as the example above
demonstrates for $N(\{A\})$.

\begin{definition}
  We call a set $S \subseteq M_n(R)$ \emph{core} if its null ideal is
  a two-sided ideal of $M_n(R)$, and \emph{noncore} otherwise.
\end{definition}

\begin{remark}
  The definition of a core set in this paper should be not confused
  with the use of the term ``core'' in graph theory, where it refers
  to a graph $X$ in which every endomorphism of $X$ is an automorphism
  \cite[Section 6.2]{Godsil-Royle:2001}. In particular, our notion of
  core and the results in this article are unrelated to the work done
  in recent papers such as \cite{Orel:2012,Orel:2016}, in which
  certain graphs constructed using matrices over finite fields are
  shown to be cores in the graph theoretical sense.
\end{remark}

The purpose of this paper is to study null ideals and core sets in
$\matFq$, where $\F_q$ denotes the finite field with $q$
elements. Originally, null ideals of matrices are a notion in
classical linear algebra, where they describe the set of all
polynomials with coefficients from a field $K$ that vanish on a matrix
in $M_n(K)$.  Transferring the concept to the setting of a commutative
ring $R$ instead of a field $K$ already adds complexity as the
polynomial ring $R[x]$ is in general not a principal ideal domain
anymore. Determining the generators of $\{ f\in R[x] \mid f(S) = 0 \}$
for a subset $S\subseteq M_n(R)$ is the subject of research in the
past decades, see e.g.~\cite{Brown:1998:null-ideals-spanning-rank,
  Brown:1999:null-ideals-spanning-rank, Brown:2005:null-ideals,
  Heuberger-Rissner:2017:null-ideals,
  Hyun-Neiger-Schost:2021:lin-rec-seq, Rissner:2016:null-ideals}.

The study of null ideals is also strongly motivated by their
connection to integer-valued polynomials. Originally a notion
associated to commutative ring theory
(see~\cite{Cahen-Chabert:1997:ivp, Cahen-Chabert:2016:ivp-survey} for
a thorough treatment on the topic), mathematicians have begun to study
integer-valued polynomials over noncommutative rings in recent
years~\cite{Evrard-Fares-Johnson:2013:ivp-lower-triangular-matrices,
  Evrard-Johnson:2015:ivp-2by2, Frisch:2017:ivp-upper-triangular,
  Naghipour-Rismanchian-Hafshejani:2017:ivp-matrix-rings,
  Hafshejani-Naghipour-Rismanchian:2020:ivp-block-matrices,
  Hafshejani-Naghipour-Sakzad:2019:ivp-matrix-rings,
  Werner:2012:ivp-matrix-rings, Werner:2017:ivp-survey}.  The
connection to null ideals in the sense of this paper is the following:
for a subset $S\subseteq M_n(R)$, we want to study
\begin{equation*}
  \Int(S, M_n(R)) := \{f\in M_n(K)[x] \mid f(S) \subseteq M_n(R)\},
\end{equation*}
where $R$ is a commutative integral domain and $K$ its quotient
field. As the evaluation map $M_n(K)[x] \to M_n(K)$ is not a ring
homomorphism anymore, it may happen that $\Int(S, M_n(R))$ is not a
ring (w.r.t.~the usual addition and multiplication of polynomials). It
is, however, known that $\Int(S, M_n(R))$ is a ring if and only if the
image of $S$ in $M_n(R/aR)$ is a core set for all nonzero $a \in R$;
see~\cite[Section~2]{Werner:2022:null-ideals} for details. Hence, one
way to characterize subsets $S$ for which $\Int(S, M_n(R))$ is a ring
is to characterize the core subsets of residue rings of $M_n(R)$.

Recently, the second author studied null ideals in the general setting
as defined above in~\eqref{eq:general-null-ideal} with a particular
focus on $2\times 2$ and $3\times 3$ matrices over a field
\cite{Swartz-Werner:2023:null-ideal-3by3, Werner:2022:null-ideals}. It
is easy to see that any matrix ring $M_n(R)$ with $n \geq 2$ will
contain both core subsets and noncore subsets. As mentioned above, a
singleton set $\{A\} \subseteq M_n(R)$ may fail to be core. On the
other hand, both the empty set and $M_n(R)$ itself are core. Less
trivially, if $S \subseteq M_n(R)$ consists of scalar matrices, then
evaluation of polynomials at elements of $S$ behaves as in the
ordinary commutative setting, and $S$ is always core in this
case. More generally, the full similarity class of a matrix $A$ is
always core \cite[Proposition 3.1]{Werner:2022:null-ideals}. The
article \cite{Werner:2022:null-ideals} includes a full algorithmic
characterization of the core subsets of $\matFq$, and some partial
results on core subsets in $M_n(\F_q)$ with $n \geq 3$ are given in
\cite{Swartz-Werner:2023:null-ideal-3by3, Werner:2022:null-ideals}.

A natural question which arises is: how common are core sets? Working
in $M_n(K)$, where $K$ is a field, we would like to determine (or at
least bound) the probability that a randomly selected subset of
$M_n(K)$ is core. In this paper, we accomplish this goal for
$\matFq$. We determine exact counts for the number of core subsets in
each similarity class of $\matFq$, and using this we show that as
$q \to \infty$, the probability that a randomly chosen subset of
$\matFq$ is core tends to $1$. Thus, asymptotically in $q$, almost all
subsets of $\matFq$ are core.

A concise summary of our main results follows.


\subsection{Results}
\label{section:results}
In this paper, we present counts for the core subsets of $\matFq$
where $\F_q$ denotes the finite field of cardinality $q$. We identify
$\F_q$ with the field of scalar matrices in $\matFq$, so that
$\F_q \subseteq \matFq$. Following~\cite{Werner:2022:null-ideals} our
approach is to first enumerate all core subsets of each minimal
polynomial class of $\matFq$. For a polynomial $m \in \F_q[x]$, we set
$\mcC(m)$ to be the set of matrices in $\matFq$ whose minimal
polynomial is $m$, and we call this the minimal polynomial class of
$m$. For $2 \times 2$ matrices, minimal polynomial classes coincide
with similarity classes. That is, $A$, $B \in \mcC(m)$ if and only if
$A$ and $B$ are $\GL2$-conjugates. This can be proved by using the
rational canonical forms of matrices (see, for instance,
\cite[Section~12.2, Exercise~2]{DummitFoote}). Consequently, we will
use the terms ``minimal polynomial class'' and ``similarity class''
interchangeably.

The minimal polynomials of matrices in $\matFq$ can be categorized
based on their factorization in $\F_q[x]$ into irreducible
polynomials. We distinguish four types of polynomials: linear
polynomials, quadratic irreducible polynomials, split quadratic
polynomials with a repeated root (i.e.,\ those of the form $(x-a)^2$),
and split quadratic polynomials with distinct roots (those of the form
$(x-a)(x-b)$ with $a \ne b$). We call these types LIN, IRR, SQR, and
SQD, respectively. Given a minimal polynomial class $\mcC$ in
$\matFq$, we will say that $\mcC$ is LIN, IRR, SQR, or SQD, depending
on the type of the polynomial corresponding to $\mcC$.

In Theorems~\ref{theorem:IRR},~\ref{SQR thm}, and~\ref{theorem:SQD} we
give exact counts for the number of (non)core subsets within each of
the four types of minimal polynomial classes.  Moreover, in
Proposition~\ref{Class sizes prop} we provide the size of each class,
and the number of such classes in $\matFq$. See Table~\ref{Noncore
  table} for a summary of these counts. Note that we consider the
empty set to be core.

For a quadratic minimal polynomial class $\mcC$, it is known
\cite[Theorem 5.14]{Werner:2022:null-ideals} that a nonempty subset
$S \subseteq \mcC$ is core if and only if there exist $A$, $B \in S$
such that $A-B$ is invertible. Thus, for IRR, SQR, and SQD classes,
the number of noncore subsets equals the number of subsets $S$ with
the property that $A-B$ is singular for all $A$, $B\in S$
(cf.~Corollaries~\ref{SQR cor} and~\ref{SQD cor}). We note that this
condition can be used to define a \textit{matrix graph} (also called a
\textit{bilinear forms graph}) as
in~\cite{Huang-Huang-Li-Sze:2014}. For this construction, one forms a
graph whose vertex set is $M_2(\F_q)$, and two vertices $A$ and $B$
are adjacent if and only if $A-B$ has rank one. A noncore set
$S \subseteq \mcC$ forms a clique in such a matrix graph. These
objects have also been studied---sometimes under different
terminology---in recent articles such as~\cite{Huang-Semrl:2016}
and~\cite{Chooi-Kwa-Lim:2017}.

\begin{table}[h]
\centering
  \begin{tabular}{|c|c|c|c|}
  \hline
   class type &  \# classes  & size of class & \# noncore subsets in class \\
   \hline
   \hline
    LIN        &   $q$        & $1$           &   0                        \\
    \hline
    IRR        &$\binom{q}{2}$& $q^2-q$       &  $q^2-q$                   \\
    \hline
    SQR        &   $q$        & $q^2-1$       &  $(q+1)(2^{q-1} -1)$        \\
    \hline
    SQD        &$\binom{q}{2}$& $q^2+q$       &  $(q+1)(2^{q+1} -q-2)$      \\
    \hline
  \end{tabular}
  \caption{Counts of noncore subsets in minimal polynomial
    classes}\label{Noncore table}
\end{table}

Subsets $S$ which are not contained in a single minimal polynomial
class can be partitioned into the subsets $S_i = S\cap \mcC(m_i)$, one
for each minimal polynomial $m_i$ that occurs among the matrices in
$S$. In that respect, the following results
from~\cite{Werner:2022:null-ideals} play a crucial role:
\begin{enumerate}
\item Unions of core sets are again core
  (\cite[Proposition~3.1]{Werner:2022:null-ideals}).
\item Subsets $S_i$ of type LIN are core (as they contain at most one
  element, which is in the center of $\matFq$).
\item If $S$ is core, then each $S_i$ of type IRR or SQR is core
  (\cite[Corollary~5.4, Proposition~5.12]{Werner:2022:null-ideals}).
\end{enumerate}
In light of these observations, we make the following definition.

\begin{definition}
  We call a set $S \subseteq \matFq$ \emph{purely core} if
  $S\cap \mcC$ is core for all minimal polynomial classes $\mcC$.
\end{definition}

Any purely core set is core, because unions of core sets are always
core. It is, however, possible that $S$ is core even though some
$S \cap \mcC$ is noncore. For this to occur, $\mcC$ must be an SQD
class. Examples of such behavior can be found in Example~\ref{ex:easy
  SQD} and Section~\ref{section:examples}.

The counts presented in Table~\ref{Noncore table} can be used to
enumerate all the purely core subsets of $\matFq$.  While there exist
core sets that are not purely core, we show in
Section~\ref{section:asymptotics} that the number of such sets is
small in comparison to the number of purely core sets (see
Theorem~\ref{Asymptotic thm}). Furthermore, asymptotically,
\textit{almost all} subsets of $\matFq$ are purely core (and hence are
core):

\begin{reptheorem}{Asymptotic main thm}
\mbox{}
\begin{enumerate}
\item\label{Asymptotic main thm 1} As $q \to \infty$, almost all subsets of $\matFq$ are purely core. That is, as $q \to \infty$,
\begin{equation*}
\dfrac{\phantom{x}\#\{\text{purely core subsets of $\matFq$}\}\phantom{x}}{\#\{\text{subsets of $\matFq$}\}} \to 1.
\end{equation*}

\item\label{Asymptotic main thm 2} As $q \to \infty$, almost all core subsets of $\matFq$ are purely core. That is, as $q \to \infty$,
\begin{equation*}
\dfrac{\phantom{x}\#\{\text{purely core subsets of $\matFq$}\}\phantom{x}}{\#\{\text{core subsets of $\matFq$}\}} \to 1.
\end{equation*}

\item\label{Asymptotic main thm 3} As $q \to \infty$, almost all subsets of $\matFq$ are core. That is, as $q \to \infty$,
\begin{equation*}
\dfrac{\phantom{x}\#\{\text{core subsets of $\matFq$}\}\phantom{x}}{\#\{\text{subsets of $\matFq$}\}} \to 1.
\end{equation*}
\end{enumerate}

\end{reptheorem}

In the last section, we present some examples to demonstrate that
counting core subsets that are not purely core quickly becomes a
challenging task, even for small~$q$.


\section{Preliminaries, Sizes of Similarity Classes, and the Irreducible Cases}
\label{section:prelim_and_sizes}
In this section, we first collect a number of notations and results
that will be used frequently in the rest of the paper. We also provide
proofs for the number of each type of minimal polynomial class listed
in Table \ref{Noncore table}, as well as the size of each
class. Finally, we prove Theorem \ref{theorem:IRR}, which enumerates
the number of core subsets of $\mcC(m)$ when $m$ is irreducible.

Given $S \subseteq M_2(\F_q)$, let $\phi_S$ be the monic least common
multiple in $\F_q[x]$ of all the minimal polynomials of the matrices
in $S$. Since $M_2(\F_q)$ is finite, such a polynomial is guaranteed
to exist. Note that if $S=\varnothing$, then $\phi_S=1$. The
polynomial $\phi_S$ is always an element of
$N(S)$~(\cite[Theorem~4.4(1)]{Werner:2022:null-ideals}). Furthermore,
it is shown in \cite[Corollary~4.6]{Werner:2022:null-ideals} that $S$
is core if and only if $N(S)$ contains no polynomial of degree less
than $\deg \phi_S$. Equivalently, $S$ is core if and only if each
polynomial in $N(S)$ is divisible by $\phi_S$, and this holds if and
only if every minimal polynomial of a matrix in $S$ divides each
polynomial in $N(S)$.

We also obtain the
following characterization of noncore subsets in quadratic minimal
polynomial classes.

\begin{lemma}\label{lem:deg 1}
  Let $\mcC$ be either an IRR, SQR, or SQD class, and let
  $S \subseteq \mcC$ be nonempty. Then, $S$ is noncore if and only if
  $N(S)$ contains a polynomial of degree~$1$.
\end{lemma}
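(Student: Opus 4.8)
The plan is to reduce the statement to the degree characterization of core sets already recalled from \cite{Werner:2022:null-ideals}, and then to dispose of the only case that characterization does not settle directly, namely a \emph{constant} polynomial lying in $N(S)$.

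First I would record that in each of the three class types the class $\mcC$ is $\mcC(m)$ for a \emph{quadratic} polynomial $m$: irreducible (IRR), a square $(x-a)^2$ (SQR), or a product $(x-a)(x-b)$ of distinct linear factors (SQD). Since minimal polynomial classes coincide with similarity classes in $\matFq$, every matrix in $\mcC$ has minimal polynomial exactly $m$. Hence, for any nonempty $S \subseteq \mcC$, the monic least common multiple $\phi_S$ of the minimal polynomials of the matrices in $S$ equals $m$, so $\deg \phi_S = 2$. Invoking the criterion recalled above (\cite[Corollary~4.6]{Werner:2022:null-ideals}), $S$ is core if and only if $N(S)$ contains no nonzero polynomial of degree strictly less than $\deg \phi_S = 2$; equivalently, $S$ is noncore if and only if $N(S)$ contains a nonzero polynomial of degree $0$ or of degree $1$.

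It then remains to exclude degree $0$. A nonzero polynomial of degree $0$ is a constant polynomial $f(x) = C$ with $0 \neq C \in \matFq$, and evaluation of a constant polynomial at any matrix returns $C$ itself; thus $f(A) = C \neq 0$ for every $A \in \mcC$, and since $S \neq \varnothing$ this forces $f \notin N(S)$. Consequently $N(S)$ contains a nonzero polynomial of degree less than $2$ if and only if it contains one of degree exactly $1$, which is precisely the asserted equivalence. I do not anticipate a real obstacle here: the only point needing care is the reduction $\deg \phi_S = 2$, i.e.\ checking that IRR, SQR, and SQD all correspond to quadratic minimal polynomials shared by every matrix in the class; once that is in place the argument is immediate.
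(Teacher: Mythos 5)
Your proposal is correct and follows essentially the same route as the paper: both reduce to the fact that $\deg\phi_S=2$ for a nonempty subset of a quadratic minimal polynomial class and then apply the degree criterion from \cite[Corollary~4.6]{Werner:2022:null-ideals}. Your explicit elimination of the degree-$0$ case (a nonzero constant cannot vanish on a nonempty set) is a detail the paper leaves implicit, and it is handled correctly.
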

\begin{proof}
  Under the stated hypotheses, $\phi_S$ has degree 2. Hence, $N(S)$ is
  not generated (as a two-sided ideal of $M_2(\F_q)[x])$ by $\phi_S$
  if and only if $N(S)$ contains a linear polynomial.
\end{proof}

This lemma allows us to describe some examples of core and noncore
subsets.

\begin{example}\label{ex:easy SQD}
  Let $A_1 = \smat{1}{0}{0}{0}$, $A_2 = \smat{1}{1}{0}{0}$, and
  $A_3=\smat{0}{0}{0}{1}$. Let $S_1 = \{A_1, A_2\}$ and
  $S_2 = \{A_1, A_3\}$. Then, all three matrices are in the SQD class
  $\mcC(x(x-1))$, and $\phi_{S_1}(x)=\phi_{S_2}(x)=x(x-1)$. The null
  ideal $N(S_1)$ contains the polynomial $\smat{0}{1}{0}{0}x$, and so
  $S_1$ is noncore by Lemma \ref{lem:deg 1}. However, we can prove
  that $S_2$ is core. Suppose that $N(S_2)$ contains
  $\alpha x + \beta$, where $\alpha, \beta \in M_2(\F_q)$. Then,
  $\alpha A_1 + \beta = 0 = \alpha A_3 + \beta$, which implies that
  $\alpha(A_1 - A_3) = 0$. Since $A_1 - A_3$ is invertible, we see
  that $\alpha=0$, and hence $\beta=0$ as well. Consequently, $N(S_2)$
  contains no linear polynomials, and so $S_2$ is core.

  Now, let
  $S_0 = \left\{\smat{0}{0}{0}{0}, \smat{1}{0}{0}{1}\right\}$, let
  $S'$ be any subset of $\mcC(x(x-1))$, and let $T = S_0 \cup
  S'$. Then, $\phi_T(x) = x(x-1)$. Since $T$ contains the scalar
  matrices in $S_0$, $N(T)$ contains no linear polynomials. Indeed if,
  $\alpha x + \beta \in N(T)$, then evaluation at the zero matrix
  shows that $\beta=0$, and evaluation at the identity matrix then
  yields $\alpha=0$. Thus, $T$ is core, even though $S'$ may be chosen
  to be noncore.
\end{example}

Building off of Lemma \ref{lem:deg 1}, we provide a number of
equivalent conditions for subsets of a quadratic minimal polynomial
class to be core. For a subset $S$ of an SQD class, the property of
being core can be described in terms of certain left ideals of
$M_2(\F_q)$ that we call the $\mcL$-modules of $S$.

\begin{definition}[{see~\cite[Definition~6.2]{Werner:2022:null-ideals}}]\label{def:L-modules}
  Let $\mcC((x-a)(x-b))$ be an SQD class, and let
  $S \subseteq \mcC((x-a)(x-b))$. The \textit{$\mcL$-modules} of $S$
  are
  \begin{align*}
    \mcL(S,a) &= \{\alpha \in M_2(\F_q) \mid \alpha(x-a) \in N(S)\} \text{ and }\\
    \mcL(S,b) &= \{\alpha \in M_2(\F_q) \mid \alpha(x-b) \in N(S)\}.
  \end{align*}
\end{definition}

\begin{proposition}\label{Single class prop}
  Let $m \in \F_q[x]$ be a monic quadratic polynomial and let
  $S \subseteq \mcC(m)$ be nonempty.
\begin{enumerate}
\item\label{Single class prop 1}
  \cite[Corollary~5.9(2)]{Werner:2022:null-ideals} If $m$ is IRR, then
  $S$ is core if and only if $|S| \geq 2$.
\item\label{Single class prop 2}
  \cite[Theorem~5.14]{Werner:2022:null-ideals} If $m$ is SQR or SQD,
  then $S$ is core if and only if there exist $A$, $B \in S$ such that
  $A-B$ is invertible.
\item\label{Single class prop 3}
  \cite[Proposition~6.4(3)]{Werner:2022:null-ideals} Assume
  $m(x)=(x-a)(x-b)$ is SQD. Then, $S$ is core if and only if
  $\mcL(S,a)=\mcL(S,b)=\{0\}$.
\end{enumerate}
\end{proposition}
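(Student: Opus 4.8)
All three statements are recalled from~\cite{Werner:2022:null-ideals}, and the plan is to reduce each of them, via Lemma~\ref{lem:deg 1}, to a question about matrices differing by (non)invertible matrices. For a quadratic minimal polynomial class, Lemma~\ref{lem:deg 1} says that a nonempty $S$ is noncore if and only if $N(S)$ contains some $\alpha x+\beta$ with $\alpha\neq 0$; equivalently, there are $\alpha,\beta\in M_2(\F_q)$ with $\alpha\neq 0$ and $\alpha A+\beta=0$ for all $A\in S$. Fixing one $A_0\in S$ this reads $\beta=-\alpha A_0$ and $\alpha(A-A_0)=0$ for all $A\in S$, i.e.\ $\alpha$ is a common nonzero left annihilator of $\{A-A_0 : A\in S\}$. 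An immediate consequence is that if some difference $A-B$ with $A,B\in S$ is invertible then no such $\alpha$ exists, so $S$ is core; this already yields the ``if'' direction of~\ref{Single class prop 2}. Also, a singleton $\{A\}$ is always noncore, since $x-A\in N(\{A\})$ has degree $1$.

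For~\ref{Single class prop 1}, I would show that any two distinct $A,B\in\mcC(m)$ with $m$ irreducible satisfy $\det(A-B)\neq 0$: if $(A-B)v=0$ with $v\neq 0$, then since $m$ is irreducible over $\F_q$, $A$ has no eigenvector in $\F_q^2$, so $\{v,Av\}$ is a basis; using $Bv=Av$ together with the fact that both $A$ and $B$ are annihilated by the common monic quadratic $m$, one computes that $A$ and $B$ have the same matrix in this basis (the companion matrix of $m$), so $A=B$, a contradiction. Hence $|S|\geq 2$ forces $S$ core, while $|S|=1$ gives $S$ noncore, proving~\ref{Single class prop 1}. The same ``non-eigenvector witness forces equality'' computation, run inside an SQR or SQD class (where matrices do have eigenvectors in $\F_q$), shows: two distinct matrices in an SQR class have singular difference exactly when they share their common kernel$=$image line, and two distinct matrices in an SQD class $\mcC((x-a)(x-b))$ have singular difference exactly when they share an $a$-eigenspace or share a $b$-eigenspace.

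For the converse direction of~\ref{Single class prop 2}, suppose every pairwise difference within $S$ is singular. In the SQR case ``share the common line'' is an equivalence relation among the matrices of the class, so all matrices of $S$ share one line; in the SQD case a short case analysis on the two possible ``reasons'' for singularity shows that all matrices of $S$ share one $a$-eigenspace or all share one $b$-eigenspace. Choosing a basis adapted to that line resp.\ eigenspace, one writes down an explicit linear polynomial $\alpha x+\beta\in N(S)$ with $\alpha$ of rank one (in particular $\alpha\neq 0$), which by Lemma~\ref{lem:deg 1} proves $S$ noncore. This finishes~\ref{Single class prop 2}.

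Finally, for~\ref{Single class prop 3} I would argue directly with the spectral idempotents $P_a(A)=(a-b)^{-1}(A-bI)$ and $P_b(A)=(b-a)^{-1}(A-aI)$ of a matrix $A\in\mcC((x-a)(x-b))$: these satisfy $A=aP_a(A)+bP_b(A)$, $P_a(A)+P_b(A)=I$, and $P_a(A)P_b(A)=P_b(A)P_a(A)=0$. If $\alpha x+\beta\in N(S)$, then substituting $A=aP_a(A)+bP_b(A)$ into $\alpha A+\beta=0$ and right-multiplying by $P_a(A)$, resp.\ $P_b(A)$, gives $(a\alpha+\beta)P_a(A)=0$ and $(b\alpha+\beta)P_b(A)=0$ for every $A\in S$, i.e.\ $a\alpha+\beta\in\mcL(S,b)$ and $b\alpha+\beta\in\mcL(S,a)$. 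Since $(a\alpha+\beta)-(b\alpha+\beta)=(a-b)\alpha$ and $a\neq b$, these two elements cannot both be $0$ unless $\alpha=0$; hence $\mcL(S,a)=\mcL(S,b)=\{0\}$ forces $N(S)$ to contain no linear polynomial, so $S$ is core. Conversely a nonzero $\gamma\in\mcL(S,a)$ gives $\gamma(x-a)\in N(S)$ of degree $1$, so $S$ is noncore; likewise for $\mcL(S,b)$. This proves~\ref{Single class prop 3}, and one recovers the SQD case of~\ref{Single class prop 2} from it by noting that $\mcL(S,a)\neq\{0\}$ precisely when all matrices in $S$ share one $b$-eigenspace (because the image of $A-aI$ is the $b$-eigenspace of $A$). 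I expect the only genuinely delicate points to be this small combinatorial step — upgrading ``all pairwise differences singular'' to ``a single shared line/eigenspace'' — and the bookkeeping needed to be sure the linear polynomials exhibited really have nonzero leading coefficient; the rest is routine $2\times 2$ linear algebra.
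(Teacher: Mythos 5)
Your proposal is correct, but note that the paper does not actually prove this proposition: all three parts are recalled verbatim from \cite{Werner:2022:null-ideals} (Corollary~5.9(2), Theorem~5.14, Proposition~6.4(3)), so there is no in-paper argument to compare against. What you have written is a legitimate self-contained derivation from Lemma~\ref{lem:deg 1} alone, and I checked the points you flagged as delicate: the ``non-eigenvector witness forces equality'' computation does show that distinct $A,B\in\mcC(m)$ with $(A-B)v=0$, $v\neq 0$, must share an eigenvector for a common root of $m$ (which is impossible when $m$ is IRR, giving~\eqref{Single class prop 1}); and in the SQD case the dichotomy ``share $E_a$ or share $E_b$'' really does propagate to all of $S$, since if $B$ shares only $E_a(A)$ with $A$ and $C$ shares only $E_b(A)$ with $A$, then $B$ and $C$ share neither eigenspace and $B-C$ is invertible. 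The exhibited linear polynomials $\smat{0}{1}{0}{0}(x-a)$ (resp.\ $(x-b)$) in an adapted basis do have nonzero leading coefficient, and the spectral-idempotent argument for~\eqref{Single class prop 3} is clean and complete. It is worth pointing out that your eigenspace analysis is essentially the same geometry the paper develops later for its counting arguments: the SQR partition into lines is Proposition~\ref{SQR prop}, and the $a$-/$b$-eigenspace dichotomy is Lemma~\ref{Bset lem}\eqref{Bset lem 3}--\eqref{Bset lem 4} phrased via $\mcB$-sets, so your route in effect proves the imported results using tools the paper only deploys downstream of them.
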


Let $m_1$, \ldots, $m_t$ be all of the possible minimal polynomials of
matrices in $M_2(\F_q)$.  As noted in the introduction, any subset
$S \subseteq M_2(\F_q)$ admits a decomposition
$S = \bigcup_{i=1}^t (S \cap \mcC(m_i))$, and $S$ is said to be purely
core if $S \cap \mcC(m_i)$ is core for each $i$. Example \ref{ex:easy
  SQD} demonstrates that not all core sets are purely core. If $S$ is
such a set, then $S$ must have a nonempty intersection with some SQD
class. Put differently, if we ignore SQD classes, then sets are core
if and only if they are purely core.

\begin{proposition}\label{Purely core non-SQD prop}
  Let $\mcC = \mcC_1 \cup \cdots \cup \mcC_k$ be a union of minimal
  polynomial classes in $\matFq$ such that $\mcC_i$ is not SQD for all
  $1 \leq i \leq k$. Then, a subset of $\mcC$ is core if and only if
  it is purely core.
\end{proposition}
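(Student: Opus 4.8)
The plan is to reduce the statement to the three facts about core sets listed in the introduction, together with the observation that being core is closed under unions. The forward direction is immediate: any purely core set is core, since $S = \bigcup_{i=1}^k (S\cap\mcC_i)$ and a union of core sets is core. So the real content is the converse: if $\mcC$ contains no SQD class and $S\subseteq\mcC$ is core, then $S\cap\mcC_i$ is core for every $i$. First I would write $S_i = S\cap\mcC_i$ and note that we may assume each $S_i$ is nonempty (empty sets are core by convention). For those $i$ with $\mcC_i$ of type LIN, $S_i$ is a single central scalar matrix, hence core automatically. For those $i$ with $\mcC_i$ of type IRR, the hypothesis that $S$ is core forces $S_i$ to be core by \cite[Corollary~5.4]{Werner:2022:null-ideals}, and for those of type SQR, by \cite[Proposition~5.12]{Werner:2022:null-ideals}.

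Thus the heavy lifting is done by the two cited results, which precisely say that the property ``$S$ core $\Rightarrow$ $S\cap\mcC_i$ core'' holds when $\mcC_i$ is IRR or SQR; the hypothesis of the proposition is exactly that these are the only non-LIN types occurring among $\mcC_1,\dots,\mcC_k$. Since LIN pieces are always core, SQR and IRR pieces are core by the above, we conclude that every $S_i$ is core, i.e. $S$ is purely core. It only remains to check that we have covered all minimal polynomial classes appearing in the decomposition of $S$: every monic minimal polynomial of a matrix in $M_2(\F_q)$ is of one of the four types LIN, IRR, SQR, SQD, and by hypothesis none of the $\mcC_i$ is SQD, so every class intersected by $S$ is LIN, IRR, or SQR.

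The one step that requires a little care---and the only place where the proof could go wrong---is making sure the quoted implications apply verbatim to subsets of a larger set rather than to subsets of a single class. That is, \cite[Corollary~5.4]{Werner:2022:null-ideals} and \cite[Proposition~5.12]{Werner:2022:null-ideals} must be stated (or must follow) in the form ``if $S\subseteq M_2(\F_q)$ is core and $\mcC_i$ is an IRR (resp.\ SQR) class, then $S\cap\mcC_i$ is core,'' not merely ``if $S\subseteq\mcC_i$ is core then \dots'' (which would be vacuous). This is exactly observation~(3) recorded in the introduction, so no additional work is needed; I would simply cite it. With that in hand the proposition follows, and in particular we obtain the stated corollary-style consequence: the enumeration of purely core subsets of a non-SQD union of classes coincides with the enumeration of core subsets, which is why the counts in Table~\ref{Noncore table} suffice to handle everything outside the SQD classes.
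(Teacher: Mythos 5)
Your proof is correct and follows essentially the same route as the paper: decompose $S$ into the pieces $S_i = S \cap \mcC_i$, note that LIN pieces are automatically core, and invoke \cite[Corollary~5.4]{Werner:2022:null-ideals} and \cite[Proposition~5.12]{Werner:2022:null-ideals} for the IRR and SQR pieces, with the trivial direction handled by closure of core sets under unions. Your cautionary remark that the cited results must be read as ``if $S$ is core then $S \cap \mcC_i$ is core'' for an arbitrary core $S \subseteq \matFq$ is exactly the reading the paper relies on (observation~(3) in the introduction), so no gap remains.
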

\begin{proof}
  The reverse implication is trivial. For the forward implication, let
  $S \subseteq \mcC$ be core. Then, $S = S_1 \cup \cdots \cup S_k$,
  where $S_i = S \cap \mcC_i$ for each $i$. Fix $j$ between $1$ and
  $k$. If $\mcC_j$ is LIN, then all subsets of $\mcC_j$ are core. If
  $\mcC_j$ is IRR or SQR, then $S_j$ must be core by~\cite[Corollary
  5.4]{Werner:2022:null-ideals} or~\cite[Proposition
  5.12]{Werner:2022:null-ideals}, respectively. Thus, $S_i$ is core
  for all $1 \leq i \leq k$, and therefore $S$ is purely core.
\end{proof}

Next, we count the number of each type (LIN, IRR, SQR, or SQD) of
minimal polynomial class in $M_2(\F_q)$, and determine the size of
each class. These results are not new, but they were not readily
available in the literature, so we provide short proofs for the sake
of completeness.

\begin{lemma}\label{Counting polynomial types lem}
  The polynomial ring $\F_q[x]$ contains $q$ monic linear polynomials,
  $(q^2-q)/2$ monic irreducible quadratic polynomials, $q$ SQR
  polynomials, and $(q^2-q)/2$ SQD polynomials.
\end{lemma}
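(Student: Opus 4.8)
The plan is to prove Lemma~\ref{Counting polynomial types lem} by a direct
count, treating the four types of monic polynomials separately and using only
elementary facts about $\F_q[x]$ and roots in $\F_q$. The monic linear
polynomials are exactly $x - a$ with $a \in \F_q$, so there are $q$ of them.
The SQR polynomials are by definition those of the form $(x-a)^2$ with
$a \in \F_q$, which again gives $q$ of them. The SQD polynomials are the
$(x-a)(x-b)$ with $a \ne b$ in $\F_q$; since such a polynomial is unchanged
under swapping $a$ and $b$, these correspond bijectively to $2$-element
subsets $\{a,b\}$ of $\F_q$, of which there are $\binom{q}{2} = (q^2-q)/2$.

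For the irreducible quadratics, the plan is to count by complementation. The
total number of monic quadratic polynomials in $\F_q[x]$ is $q^2$, since such
a polynomial is $x^2 + cx + d$ with $c, d \in \F_q$ arbitrary. A monic
quadratic is reducible over $\F_q$ if and only if it has a root in $\F_q$,
and every monic reducible quadratic factors uniquely (up to order) as a
product of two monic linear factors; these are exactly the SQR and SQD
polynomials already counted, so there are $q + (q^2-q)/2 = (q^2+q)/2$ monic
reducible quadratics. Subtracting, the number of monic irreducible quadratics
is $q^2 - (q^2+q)/2 = (q^2-q)/2$, as claimed.

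I do not expect any serious obstacle here; the only point requiring a word of
care is the uniqueness of the factorization of a monic reducible quadratic
into monic linear factors, which ensures there is no double-counting between
the SQR and SQD cases and that every reducible monic quadratic is accounted
for exactly once. This follows from unique factorization in $\F_q[x]$.
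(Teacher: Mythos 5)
Your proof is correct and follows essentially the same route as the paper: count the LIN, SQR, and SQD polynomials by their roots, then obtain the irreducible quadratic count by subtracting from the $q^2$ monic quadratics. The extra remark on unique factorization is a harmless elaboration of the same argument.
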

\begin{proof}
  The polynomials in LIN, SQR, or SQD classes are determined by their
  roots. Hence there are $q$ linear polynomials, and $q$ SQR
  polynomials. In the SQD case, there are $\binom{q}{2} = (q^2-q)/2$
  such polynomials, which correspond to the subsets of $\F_q$ of size
  2. Since $\F_q[x]$ contains $q^2$ monic polynomials of degree 2,
  this leaves $q^2 - q - (q^2-q)/2 = (q^2-q)/2$ monic quadratic
  irreducible polynomials.
\end{proof}

\begin{proposition}\label{Class sizes prop}
  Let $\mcC$ be a minimal polynomial class in $\matFq$.
  \begin{enumerate}
  \item\label{Class sizes prop 1} If $\mcC$ is LIN, then $|\mcC| = 1$.
  \item\label{Class sizes prop 2} If $\mcC$ is IRR, then
    $|\mcC| = q^2-q$.
  \item\label{Class sizes prop 3} If $\mcC$ is SQR, then
    $|\mcC| = q^2-1$.
  \item\label{Class sizes prop 4} If $\mcC$ is SQD, then
    $|\mcC| = q^2+q$.
  \end{enumerate}
\end{proposition}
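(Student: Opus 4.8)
The plan is to count each similarity class by relating it to the conjugation action of $\GL2$ on matrices with a fixed minimal polynomial, using the orbit–stabilizer theorem: $|\mcC| = |\GL2| / |\cent(A)|$ where $\cent(A)$ is the centralizer of a representative matrix $A \in \mcC$. Recall that $|\GL2| = (q^2-1)(q^2-q)$. So the whole proposition reduces to computing the centralizer order in each of the four cases, after observing that each minimal polynomial class is a single conjugacy class (as noted in the introduction, via rational canonical forms).

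**First I would handle LIN.** If $m = x - a$ is linear, then $\mcC(m) = \{aI\}$ is a single scalar matrix, so $|\mcC| = 1$ directly — no orbit count needed. **Next, the IRR and SQD cases together:** here $A$ is diagonalizable over $\F_q$ or $\F_{q^2}$ with two distinct eigenvalues (in $\F_{q^2}\setminus\F_q$ for IRR, in $\F_q$ for SQD), so $A$ generates a maximal subfield/étale subalgebra and its centralizer is exactly $\F_q[A]^\times$. For IRR, $\F_q[A] \cong \F_{q^2}$, so $|\cent(A)| = q^2 - 1$, giving $|\mcC| = (q^2-1)(q^2-q)/(q^2-1) = q^2 - q$. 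For SQD, $\F_q[A] \cong \F_q \times \F_q$, so $|\cent(A)| = (q-1)^2$, giving $|\mcC| = (q^2-1)(q^2-q)/(q-1)^2 = q^2 + q$. **Finally the SQR case:** take the representative $A = \smat{a}{1}{0}{a}$ (a single Jordan block); a direct computation shows a matrix commutes with $A$ iff it has the form $\smat{c}{d}{0}{c}$, and such a matrix is invertible iff $c \ne 0$, so $|\cent(A)| = q(q-1)$ and $|\mcC| = (q^2-1)(q^2-q)/(q(q-1)) = q^2 - 1$.

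**The only mild obstacle** is justifying that the centralizer in $M_2(\F_q)$ of a non-scalar matrix $A$ equals $\F_q[A]$ in the IRR and SQD cases: this follows because $\F_q[A]$ is a $2$-dimensional commutative subalgebra, hence a maximal commutative subalgebra of $M_2(\F_q)$, and the centralizer of a non-central element is commutative here (or one simply solves the linear system $XA = AX$ directly, which in $2 \times 2$ is painless and is probably the cleanest route for all three nontrivial cases at once). Alternatively, one can avoid centralizers entirely by a counting argument: for IRR and SQD, count ordered pairs of (eigenvalue data, eigenbasis) up to scaling; for SQR, count $(A - aI)$ as a rank-one nilpotent, of which there are $(q^2-1)(q-1) \cdot q / (q-1) = q^2 - 1$ — but the orbit–stabilizer approach is more uniform, so I would present that. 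I would also cross-check each count against the total: $q\cdot 1 + \binom{q}{2}(q^2-q) + q(q^2-1) + \binom{q}{2}(q^2+q)$ should equal $q^4 = |M_2(\F_q)|$, which it does, providing a sanity check worth stating.
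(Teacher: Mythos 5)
Your proof is correct and takes essentially the same route as the paper: orbit--stabilizer with $|\mcC| = |\GL2|/|\cent(A)|$, $|\GL2|=(q^2-1)(q^2-q)$, and identification of $\cent(A)$ with the unit group of $\F_q[A] \cong \F_q[x]/(m)$ in each quadratic case (the paper handles your ``mild obstacle'' by citing the standard fact that a matrix whose minimal polynomial equals its characteristic polynomial has centralizer exactly $\F_q[A]$). The only blemish is a spurious factor of $q$ in your parenthetical alternative count of rank-one nilpotents --- the correct count is $(q^2-1)(q-1)/(q-1) = q^2-1$ --- but since you explicitly decline to use that route, the argument as presented stands.
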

\begin{proof}
  Part~\eqref{Class sizes prop 1} is trivial. For the degree 2 cases,
  assume that $\mcC = \mcC(m)$, where $m$ is a monic quadratic
  polynomial in $\F_q[x]$, and let $A \in \matFq$ have minimal
  polynomial $m$. Then, $\mcC$ is equal to the similarity class of
  $A$. Since this class is the orbit of $A$ under the conjugation
  action of $\GL2$, this means that $|\mcC| = |\GL2|/|\cent(A)|$,
  where $\cent(A) = \{U \in \GL2 \mid U^{-1}AU=A\}$ is the centralizer
  of $A$. As $|\GL2| = (q^2-1)(q^2-q)$, it suffices to calculate
  $|\cent(A)|$ in each case.

  The minimal polynomial of $A$ equals the characteristic polynomial
  of $A$, so by~\cite[Corollary~4.4.18]{HornJohnson} $\cent(A)$ is
  equal to the unit group of the ring $\F_q[A]$, and
  $\F_q[A] \cong \F_q[x]/(m)$. We now consider cases depending on the
  factorization type of $m$. If $\mcC$ is IRR, then $\F_q[A]$ is a
  field of order $q^2$, $|\cent(A)| = q^2-1$, and $|\mcC| = q^2-q$.

  Suppose next that $\mcC$ is SQR and $m(x)=(x-\lambda)^2$ for some
  $\lambda \in \F_q$. Since the translation
  $f(x) \mapsto f(x-\lambda)$ is a ring automorphism of $\F_q[x]$, we
  have
  \begin{equation*}
    \F_q[A] \cong \F_q[x]/((x-\lambda)^2) \cong \F_q[x]/(x^2).
  \end{equation*}
  This last ring is sometimes called the ring of dual numbers over
  $\F_q$ \cite[p.~729]{DummitFoote}, and is isomorphic to
  $\big\{\smat{a}{b}{0}{a} \mid a, b \in \F_q\big\}$ via the mapping
  $a+bx \mapsto \smat{a}{b}{0}{a}$. From this form of $\F_q[A]$, we
  may calculate that $|\cent(A)| = q(q-1)$ and $|\mcC| = q^2-1$.

  Finally, assume that $\mcC$ is SQD and that $m(x)=(x-a)(x-b)$ for
  some $a,b \in \F_q$ with $a\ne b$. Then, by the Chinese Remainder
  Theorem \cite[Section~7.6, Theorem~17]{DummitFoote},
  \begin{equation*}
    \F_q[A] \cong \F_q[x]/((x-a)(x-b)) \cong \F_q[x]/(x-a) \times \F_q[x]/(x-b) \cong \F_q \times \F_q.
  \end{equation*}
  It follows that $|\cent(A)| = (q-1)^2$ and $|\mcC| = q(q+1)$.
\end{proof}

When $m$ is an irreducible polynomial, it is not hard to determine the
number of core subsets of $\mcC(m)$.

\begin{theorem}\label{theorem:IRR}
  Let $m$ be an irreducible polynomial of degree at most $2$.
  \begin{enumerate}
  \item If $m$ is linear, then $\mcC(m)$ contains $2$ core subsets.
  \item If $m$ is quadratic, then $\mcC(m)$ contains $q^2-q$ noncore
    subsets.
  \end{enumerate}
\end{theorem}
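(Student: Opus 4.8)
The plan is to handle the two cases separately, as they are quite different in flavor.

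\medskip

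\textbf{The linear case.} Suppose $m(x) = x - \lambda$. Then $\mcC(m)$ consists of the single scalar matrix $\lambda I$. A subset of a one-element set is either empty or the whole set, so there are exactly $2$ subsets to consider. The empty set is core by convention, and the full similarity class of any matrix is core by \cite[Proposition~3.1]{Werner:2022:null-ideals}; alternatively, a singleton consisting of a scalar matrix is core because evaluation at a central element is multiplicative, so $N(\{\lambda I\})$ is automatically two-sided. Either way, both subsets are core, giving the count $2$.

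\medskip

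\textbf{The quadratic case.} Now $m$ is irreducible of degree $2$, so $\mcC(m)$ is an IRR class and, by Proposition~\ref{Class sizes prop}\eqref{Class sizes prop 2}, has size $q^2 - q$. The strategy is to count the noncore subsets directly using Proposition~\ref{Single class prop}\eqref{Single class prop 1}: a nonempty $S \subseteq \mcC(m)$ is core if and only if $|S| \geq 2$. Contrapositively, the noncore subsets of $\mcC(m)$ are precisely the singletons $\{A\}$ with $A \in \mcC(m)$. (Note that the empty set is core, so it is excluded.) Since there are $|\mcC(m)| = q^2 - q$ such singletons, $\mcC(m)$ contains exactly $q^2 - q$ noncore subsets, which is the claimed count.

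\medskip

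I do not anticipate a genuine obstacle here, since both parts follow immediately from results already established in the excerpt (the size of IRR classes from Proposition~\ref{Class sizes prop}, and the characterization of core subsets of an IRR class from Proposition~\ref{Single class prop}\eqref{Single class prop 1}). The only point requiring a moment's care is bookkeeping about the empty set: it is core by convention, so in the quadratic case it must not be counted among the noncore subsets, and indeed it is not a singleton. If one wished instead to state the quadratic case in terms of core subsets, the count would be $2^{q^2-q} - (q^2 - q)$, but the noncore formulation matches Table~\ref{Noncore table} and is the cleaner statement.
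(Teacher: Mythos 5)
Your proof is correct and follows essentially the same route as the paper: the linear case reduces to the two subsets of a singleton central class, and the quadratic case combines Proposition~\ref{Single class prop}\eqref{Single class prop 1} (core iff $|S|\geq 2$) with the class size $q^2-q$ from Proposition~\ref{Class sizes prop}\eqref{Class sizes prop 2} to identify the noncore subsets as exactly the singletons. The bookkeeping about the empty set is handled correctly.
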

\begin{proof}
  If $m(x) = x-a$ with $a\in \F_q$, then $\mcC(m) = \{a\}$. This is a
  subset of the center of $\matFq$ and hence core,
  cf.~\cite[Lemma~3.3]{Werner:2022:null-ideals}.

  Let $m$ be quadratic irreducible and $S\subseteq \mcC(m)$. By
  Proposition~\ref{Single class prop}\eqref{Single class prop 1}, $S$
  is core if and only if $|S| \ge 2$. Since $|\mcC(m)| = q^2-q$ by
  Proposition~\ref{Class sizes prop}\eqref{Class sizes prop 2},
  $\mcC(m)$ contains $q^2-q$ singleton sets which are exactly the
  noncore subsets.
\end{proof}

In the next two sections, we will focus on counting noncore subsets of
SQR classes and SQD classes. Dealing with SQR classes is
straightforward, but SQD classes require a more detailed analysis and
make frequent use of the $\mcL$-modules from
Definition~\ref{def:L-modules}.


\section{SQR polynomials}
\label{section:SQR}
Consider an SQR class $\mcC((x-a)^2)$, where $a \in \F_q$. Matrices in
$\mcC((x-a)^2)$ are in bijective correspondence with matrices in
$\mcC(x^2)$ via the translation $A \to A+a$, and this mapping
preserves core subsets~\cite[Proposition
3.4]{Werner:2022:null-ideals}. Thus, to determine the number of core
subsets of $\mcC((x-a)^2)$, it suffices to work in $\mcC(x^2)$.

\begin{definition}\label{SQR subclasses}
  In $\mcC(x^2)$, we define the following subclasses of matrices.
  \begin{itemize}
  \item $T_0 = \left\{\smat{0}{a}{0}{0} \mid a \ne 0\right\}$
  \item $T_0' = \left\{\smat{0}{0}{a}{0} \mid a \ne 0\right\}$
  \item For each $\lambda \in \F_q^\times$,
    $T_\lambda = \left\{ \smatb{a}{\phantom{xx}\lambda
        a}{-\lambda^{-1}a}{\phantom{xx}-a} \longmid a \ne 0 \right\}$
  \end{itemize}
  We let $\msP = \{T_0, T_0'\} \cup \{T_\lambda \mid \lambda \ne 0\}$
  be the collection of all of the above subclasses.
\end{definition}

\begin{proposition}\label{SQR prop} \mbox{}
  \begin{enumerate}
  \item\label{SQR prop 1} For all $T \in \msP$, $|T| = q-1$.
  \item\label{SQR prop 2} The sets in $\msP$ form a partition of
    $\mcC(x^2)$.
  \item\label{SQR prop 3} For all $A$, $B \in \mcC(x^2)$, $A-B$ is
    singular if and only if there exists $T \in \msP$ such that
    $A, B \in T$.
  \item\label{SQR prop 4} Let $S \subseteq \mcC(x^2)$. Then, $S$ is
    noncore if and only if $S$ is a nonempty subset of some
    $T \in \msP$.
  \end{enumerate}
\end{proposition}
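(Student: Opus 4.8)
The plan is to take the four parts in order, with part~\eqref{SQR prop 3} carrying the real content and the rest reducing to bookkeeping around the explicit parametrization. It is worth recording at the outset that $\mcC(x^2)$ is exactly the set of nonzero $N\in M_2(\F_q)$ with $\tr N=0$ and $\det N=0$ — immediate from Cayley--Hamilton, since for a $2\times 2$ matrix the characteristic and minimal polynomials coincide once the former is $x^2$ — equivalently, writing $N=\smat{a}{b}{c}{-a}$, those with $a^2+bc=0$ and $N\neq 0$. Geometrically each such $N$ has rank one with $\operatorname{im}N=\ker N$ a line in $\F_q^2$, and the members of $\msP$ turn out to be precisely the fibers of $N\mapsto\operatorname{im}N$.

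For~\eqref{SQR prop 1}, each of $T_0$, $T_0'$, $T_\lambda$ is a family indexed by a single parameter $a\in\F_q^\times$ with one matrix entry (the $(1,2)$-entry for $T_0$, the $(2,1)$-entry for $T_0'$, the $(1,1)$-entry for $T_\lambda$) equal to $a$, so the parametrization is injective and $|T|=q-1$. For~\eqref{SQR prop 2}, pairwise disjointness follows by comparing entries (members of $T_\lambda$ have nonzero $(1,1)$-entry while those of $T_0,T_0'$ have it zero; the ratio of the $(1,2)$- to the $(1,1)$-entry separates $T_\lambda$ from $T_{\lambda'}$; which off-diagonal entry vanishes separates $T_0$ from $T_0'$), and covering follows by splitting on $a$: given $N=\smat{a}{b}{c}{-a}\in\mcC(x^2)$, if $a=0$ then $bc=0$ puts $N$ in $T_0$ or $T_0'$, and if $a\neq 0$ then $b,c\neq 0$ and $N\in T_{b/a}$. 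With the count $|\msP|\cdot(q-1)=(q+1)(q-1)=q^2-1=|\mcC(x^2)|$ from Proposition~\ref{Class sizes prop}\eqref{Class sizes prop 3}, this gives the partition.

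Part~\eqref{SQR prop 3} is the crux. The direction ``$A,B$ in a common $T$ $\Rightarrow$ $A-B$ singular'' is immediate: the elements of a fixed $T$ are all scalar multiples of one fixed nonzero nilpotent matrix, so $A-B$ is again such a multiple, hence singular. For the converse, suppose $A-B$ is singular; then $\tr(A-B)=0$ (both traces vanish) and $\det(A-B)=0$, so $(A-B)^2=0$ by Cayley--Hamilton, whence either $A=B$ (and both lie in the $T$ containing $A$) or $A-B\in\mcC(x^2)$. In the remaining case I would reduce to $A\in T_0$: conjugation by a suitable $U\in\GL2$ sends $\{N:\operatorname{im}N=\ell\}$ to $\{N:\operatorname{im}N=U\ell\}$, hence permutes $\msP$, while preserving $\mcC(x^2)$ and singularity of differences. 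So we may take $A=\smat{0}{a}{0}{0}$ with $a\neq 0$; writing $B=\smat{p}{r}{s}{-p}$ with $p^2+rs=0$, a one-line computation gives $\det(A-B)=as$, so singularity forces $s=0$, hence $p=0$ and $B=\smat{0}{r}{0}{0}\in T_0$. (A coordinate-free alternative: if $\operatorname{im}A\neq\operatorname{im}B$, a basis adapted to these two lines makes $A-B$ anti-diagonal with nonzero entries, hence invertible.)

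Finally, part~\eqref{SQR prop 4} is formal: since $x^2$ is SQR, Proposition~\ref{Single class prop}\eqref{Single class prop 2} says a nonempty $S\subseteq\mcC(x^2)$ is core iff some pair $A,B\in S$ has $A-B$ invertible, which by~\eqref{SQR prop 3} happens iff $A,B$ lie in distinct members of the partition $\msP$; negating, $S$ is noncore iff all of $S$ lies in a single $T\in\msP$, and ``noncore'' forces $S$ nonempty since $\varnothing$ is core. The one genuinely nontrivial step in all of this is the converse in~\eqref{SQR prop 3} — that two index-two nilpotents with singular difference must share their image line — and that is where I would put the care; the conjugation-to-$T_0$ reduction (or the adapted-basis argument) keeps it to a two-line determinant computation.
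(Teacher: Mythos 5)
Your proof is correct and follows essentially the same route as the paper's: the same identification of each $T\in\msP$ as the nonzero scalar multiples of a fixed rank-one nilpotent, the same counting argument for the partition, and the same appeal to Proposition~\ref{Single class prop}\eqref{Single class prop 2} for part~\eqref{SQR prop 4}. The only difference is that for the forward direction of part~\eqref{SQR prop 3} you supply an explicit verification (conjugating to $T_0$ and computing $\det(A-B)=as$, or the adapted-basis argument) where the paper simply asserts that two rank-one matrices with singular difference must lie in the same $T$; your added detail is welcome and checks out.
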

\begin{proof}
  \eqref{SQR prop 1} Each set in $\msP$ consists of the nonzero scalar
  multiples of a single matrix. Explicitly,
  $T_0 = \F_q^\times \cdot \smat{0}{1}{0}{0}$,
  $T_0' = \F_q^\times \cdot \smat{0}{0}{1}{0}$, and
  $T_\lambda = \F_q^\times \cdot
  \smat{1}{\lambda}{-\lambda^{-1}}{-1}$. Thus, each set in $\msP$ has
  cardinality $q-1$.

  \eqref{SQR prop 2} Let $\mcU = \bigcup_{T \in \msP} T$ be the union
  of all the sets in $\msP$.  Given $S$, $T \in \msP$ such that
  $S \ne T$, it is clear that $S \cap T = \varnothing$. Thus,
  $|\mcU| = (q+1)(q-1) = |\mcC(x^2)|$.

  \eqref{SQR prop 3} $(\Leftarrow)$ Assume that matrices $A$ and $B$
  are contained in the same set $T \in \msP$. Since elements of $T$
  are scalar multiples of one another, we have $B=cA$ for some nonzero
  $c \in \F_q$. Thus, $A-B = (1-c)B$, which is singular because $B$ is
  singular.

  ($\Rightarrow$) Let $A, B \in \mcC(x^2)$ and assume that $A-B$ is
  singular. By (2), there exist $T_1$, $T_2 \in \msP$ such that
  $A \in T_1$ and $B \in T_2$. Since both $A$ and $B$ have rank one
  and $A-B$ is singular, it follows that $T_1=T_2$.

  \eqref{SQR prop 4} The empty set is core, so assume that
  $S \ne \varnothing$. By Proposition~\ref{Single class
    prop}\eqref{Single class prop 2}, $S$ is noncore if and only if
  $A-B$ is singular for all $A$, $B \in S$. By part~\eqref{SQR prop
    3}, this holds if and only if $S \subseteq T$ for some
  $T \in \msP$.
\end{proof}

\begin{theorem}\label{SQR thm}
  Each SQR class in $\matFq$ contains $(q+1)(2^{q-1}-1)$ noncore
  subsets.
\end{theorem}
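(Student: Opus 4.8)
The plan is to reduce Theorem~\ref{SQR thm} to a counting problem over the partition $\msP$ established in Proposition~\ref{SQR prop}. By the translation argument already recorded at the start of Section~\ref{section:SQR}, it suffices to count the noncore subsets of $\mcC(x^2)$, since every SQR class is in core-preserving bijection with $\mcC(x^2)$. By Proposition~\ref{SQR prop}\eqref{SQR prop 4}, the noncore subsets of $\mcC(x^2)$ are exactly the nonempty subsets $S$ with $S \subseteq T$ for some $T \in \msP$. So the task is to count the nonempty subsets of $\mcC(x^2)$ that lie inside at least one block of the partition $\msP$.

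First I would use inclusion–exclusion, or rather the simpler fact that the blocks of $\msP$ are pairwise disjoint (Proposition~\ref{SQR prop}\eqref{SQR prop 2}), which means a nonempty subset of $\mcC(x^2)$ can be contained in \emph{at most one} block $T \in \msP$. Hence the noncore subsets are counted with no overcounting: the number of noncore subsets equals $\sum_{T \in \msP} \#\{\text{nonempty subsets of } T\}$. By Proposition~\ref{SQR prop}\eqref{SQR prop 1}, each $T \in \msP$ has $|T| = q-1$, so it has $2^{q-1} - 1$ nonempty subsets. Since $|\msP| = q+1$ (two sets $T_0, T_0'$ together with the $q-1$ sets $T_\lambda$ for $\lambda \in \F_q^\times$), the total count is $(q+1)(2^{q-1} - 1)$, as claimed.

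There is essentially no obstacle here: the theorem is a direct corollary of Proposition~\ref{SQR prop}, and the only point requiring a sentence of justification is that distinct blocks of $\msP$ cannot both contain the same nonempty set, which is immediate from disjointness. I would write the proof in three short sentences: reduce to $\mcC(x^2)$ via translation; invoke Proposition~\ref{SQR prop}\eqref{SQR prop 4} together with disjointness to get the sum $\sum_{T \in \msP}(2^{|T|}-1)$; and plug in $|T| = q-1$ and $|\msP| = q+1$ to conclude.

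\begin{proof}
  By the translation correspondence discussed at the beginning of this
  section, every SQR class contains the same number of noncore subsets
  as $\mcC(x^2)$, so it suffices to count the noncore subsets of
  $\mcC(x^2)$. By Proposition~\ref{SQR prop}\eqref{SQR prop 4}, a
  subset $S \subseteq \mcC(x^2)$ is noncore if and only if $S$ is a
  nonempty subset of some $T \in \msP$. Since the sets in $\msP$ are
  pairwise disjoint by Proposition~\ref{SQR prop}\eqref{SQR prop 2},
  each nonempty $S$ is contained in at most one such $T$; hence the
  number of noncore subsets of $\mcC(x^2)$ equals
  $\sum_{T \in \msP}\bigl(2^{|T|} - 1\bigr)$. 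By
  Proposition~\ref{SQR prop}\eqref{SQR prop 1} we have $|T| = q-1$ for
  each $T \in \msP$, and $|\msP| = q+1$. Therefore $\mcC(x^2)$, and
  thus each SQR class, contains $(q+1)(2^{q-1}-1)$ noncore subsets.
\end{proof}
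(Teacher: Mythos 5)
Your proof is correct and follows essentially the same route as the paper: reduce to $\mcC(x^2)$ by translation, apply Proposition~\ref{SQR prop}\eqref{SQR prop 4}, and count nonempty subsets of the $q+1$ blocks of size $q-1$. The only difference is that you explicitly justify the absence of overcounting via disjointness of the blocks, a point the paper leaves implicit; this is a harmless (indeed slightly more careful) addition.
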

\begin{proof}
  Let $\mcC$ be an SQR class in $\matFq$. As noted at the start of
  Section~\ref{section:SQR}, the number of noncore subsets of $\mcC$
  is the same as the number of noncore subsets of $\mcC(x^2)$. By
  Proposition~\ref{SQR prop}\eqref{SQR prop 4}, the noncore subsets of
  $\mcC(x^2)$ are exactly the nonempty subsets of the subclasses
  introduced in Definition~\ref{SQR subclasses}. There are $q+1$ such
  subclasses, each of size $q-1$. Hence, $\mcC(x^2)$ contains
  $(q+1)(2^{q-1}-1)$ noncore subsets.
\end{proof}

Via Proposition \ref{Single class prop}\eqref{Single class prop 2}, we
can restate Theorem \ref{SQR thm} in terms of subsets where the
difference of each pair of matrices is singular.
\begin{corollary}\label{SQR cor}
  Let $m$ be an SQR polynomial. Then, $\mcC(m)$ contains
  $(q+1)(2^{q-1}-1)$ non-empty subsets $S$ with the property that $A-B$
  is singular for all $A$, $B \in S$.
\end{corollary}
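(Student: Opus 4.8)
The plan is to observe that this corollary is essentially a translation of Theorem \ref{SQR thm} through Proposition \ref{Single class prop}\eqref{Single class prop 2}. The first step is to fix an SQR polynomial $m$ and apply the translation isomorphism: writing $m(x) = (x-a)^2$, the map $A \mapsto A + a$ gives a bijection between $\mcC(m)$ and $\mcC(x^2)$, and by \cite[Proposition 3.4]{Werner:2022:null-ideals} this bijection preserves core subsets. Moreover, this map is additive in the sense that it sends a pair $\{A, B\}$ with $A - B$ singular to a pair whose difference is still $A - B$ (the shift by $a$ cancels), so the property ``$A-B$ is singular for all $A, B \in S$'' is preserved as well. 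Hence it suffices to count nonempty subsets $S \subseteq \mcC(x^2)$ with this singularity property.

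The second step is to identify these subsets via Proposition \ref{Single class prop}\eqref{Single class prop 2}: a nonempty $S \subseteq \mcC(x^2)$ has $A - B$ singular for all $A, B \in S$ precisely when $S$ is \emph{not} core, i.e., when $S$ is noncore. (The contrapositive direction: $S$ is core iff there exist $A, B \in S$ with $A - B$ invertible.) Therefore the set of nonempty subsets with the singularity property is exactly the set of noncore subsets of $\mcC(x^2)$.

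The final step is simply to invoke Theorem \ref{SQR thm}, which states that each SQR class contains $(q+1)(2^{q-1}-1)$ noncore subsets. Combining with the bijection of the first step, $\mcC(m)$ itself contains exactly $(q+1)(2^{q-1}-1)$ nonempty subsets $S$ with the property that $A - B$ is singular for all $A, B \in S$. There is no real obstacle here — the only minor point to be careful about is that the empty set, though core, vacuously satisfies the singularity condition; the statement explicitly restricts to nonempty subsets, so this edge case is already excluded and the count matches Theorem \ref{SQR thm} (which likewise counts noncore, hence nonempty, subsets) on the nose.
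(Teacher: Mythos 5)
Your proposal is correct and matches the paper's (implicit) argument: the corollary is just Theorem \ref{SQR thm} restated through Proposition \ref{Single class prop}\eqref{Single class prop 2}, which identifies the nonempty subsets with all pairwise differences singular as exactly the noncore subsets. The initial translation to $\mcC(x^2)$ is harmless but unnecessary, since both Theorem \ref{SQR thm} and Proposition \ref{Single class prop}\eqref{Single class prop 2} already apply to an arbitrary SQR class.
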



\section{SQD polynomials}
\label{section:SQD}
Among the four types of minimal polynomial classes (LIN, IRR, SQR, and
SQD), the SQD classes are the most difficult in which to count core
subsets. When $\mcC$ is an SQR class, Proposition \ref{SQR prop} shows
that we can partition $\mcC$ in such a way that noncore subsets are
easy to identify. We are not able to obtain such a clean decomposition
in an SQD class. However, by using $\mcL$-modules, we can define a
useful collection of noncore subsets of an SQD class. Recall that for
a subset $S \subseteq \mcC((x-a)(x-b))$ of an SQD class, the
$\mcL$-modules of $S$ are
\begin{align*}
  \mcL(S,a) &= \{\alpha \in M_2(\F_q) \mid \alpha(x-a) \in N(S)\} \text{ and }\\
  \mcL(S,b) &= \{\alpha \in M_2(\F_q) \mid \alpha(x-b) \in N(S)\}.
\end{align*}

\begin{definition}\label{Bubble def}
  Let $\mcC = \mcC((x-a)(x-b))$ be an SQD class. For each
  $A \in \mcC$, we define
  \begin{align*}
    \mcB(A,a) &:= \{B \in \mcC \mid \mcL(\{A,B\},a) \ne \{0\}\} \text{ and }\\
    \mcB(A,b) &:= \{B \in \mcC \mid \mcL(\{A,B\},b) \ne \{0\}\}.
  \end{align*}
  We call these the \emph{$\mcB$-sets} for $A$.
\end{definition}

In Lemmas~\ref{L-module lem 2} and~\ref{Bset lem} below,
we prove that each $\mcB$-set is noncore and has size $q$. By
\cite[Corollary 5.9]{Werner:2022:null-ideals}, any subset of an SQD
class of size at least $q+1$ is core. Hence, $\mcB$-sets are noncore
subsets of maximal size in an SQD class. In Theorem~\ref{theorem:SQD},
$\mcB$-sets will be used to describe all of the noncore subsets of the
SQD class $\mcC((x-a)(x-b))$. Since these $\mcB$-sets are defined in
terms of $\mcL$-modules, we require more precise knowledge of these
structures. First, we show that the nonzero $\mcL$-modules in $\matFq$
can be indexed by using linear subspaces and row vectors in~$\F_q^2$.

\begin{definition}\label{L_v def}
  Let $v = [v_1, v_2 ]$ be a row vector in $\F_q^2$. We
  define
  \begin{equation*}
    \mcL_v = \mcL_{[ v_1, v_2 ]} := \left\{\begin{bmatrix} y \\ z \end{bmatrix} \big[\begin{matrix}v_1 & v_2 \end{matrix}\big] \longmid  y, z \in \F_q\right\}.
  \end{equation*}
\end{definition}

\begin{lemma}\label{L-module lem 1} \mbox{}
  \begin{enumerate}
  \item\label{L-mod 1, 1} For all $v \in \F_q^2$, $\mcL_v$ is a left
    ideal of $\matFq$. When $v \ne 0$, $\mcL_v$ is a minimal left
    ideal of $\matFq$.

  \item\label{L-mod 1, 2} For all $v$, $w \in \F_q^2$,
    $\mcL_v = \mcL_w$ if and only if $v$ and $w$ are nonzero scalar
    multiples of one another.

  \item\label{L-mod 1, 3} If $L$ is a minimal left ideal of $\matFq$,
    then either $L = \mcL_{[ 0,1 ]}$ or
    $L = \mcL_{[ 1,\lambda ]}$ for some
    $\lambda \in \F_q$.
  \end{enumerate}
\end{lemma}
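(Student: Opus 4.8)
The plan is to verify each part directly from the explicit description $\mcL_v = \{ w^{\mathsf{t}} v \mid w \in \F_q^2 \}$, where I write column vectors on the left and the row vector $v$ on the right so that $w^{\mathsf{t}} v$ is a rank-$\leq 1$ matrix in $\matFq$.

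For part \eqref{L-mod 1, 1}, I would first note that $\mcL_v$ is closed under addition because $w_1^{\mathsf{t}} v + w_2^{\mathsf{t}} v = (w_1 + w_2)^{\mathsf{t}} v$, and closed under left multiplication by $\matFq$ because $M (w^{\mathsf{t}} v) = (M w)^{\mathsf{t}} \cdot$ — more precisely $M(w^{\mathsf{t}} v) = (Mw^{\mathsf{t}}) v$, and $M w^{\mathsf{t}}$ is again a column vector, so the product lies in $\mcL_v$. Hence $\mcL_v$ is a left ideal. For minimality when $v \neq 0$, I would observe that as an $\F_q$-vector space $\mcL_v \cong \F_q^2$ (the map $w \mapsto w^{\mathsf{t}} v$ is injective precisely because $v \neq 0$), so $\dim_{\F_q} \mcL_v = 2$; since $\matFq$ is a simple Artinian ring of dimension $4$, its minimal left ideals are exactly the $2$-dimensional ones, so any nonzero proper left ideal containing $\mcL_v$ would have to equal it. Alternatively, and perhaps more cleanly, I would show that $\matFq \cdot (w^{\mathsf{t}} v) = \mcL_v$ for every nonzero $w$, which gives both that $\mcL_v$ is generated by any one of its nonzero elements and hence is minimal.

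For part \eqref{L-mod 1, 2}, if $v$ and $w$ are both nonzero and $w = c v$ with $c \in \F_q^\times$, then $\mcL_w = \mcL_v$ is immediate from the definition (absorb $c$ into the column vector). Conversely, if $\mcL_v = \mcL_w$ with $v \neq 0$, then $w \neq 0$ as well (the zero row vector gives $\mcL_0 = \{0\}$), and the matrix $\smat{1}{0}{0}{0}$ row-picks: writing $e_1^{\mathsf{t}} v \in \mcL_v = \mcL_w$ shows $e_1^{\mathsf{t}} v = u^{\mathsf{t}} w$ for some column $u$; comparing the two rows forces $v$ to be a scalar multiple of $w$. The degenerate case where one of $v, w$ is zero is handled separately: $\mcL_0 = \{0\}$, which equals $\mcL_v$ only if $v = 0$.

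For part \eqref{L-mod 1, 3}, I would invoke the standard structure theory of $\matFq = M_2(\F_q)$: its minimal left ideals are exactly the ideals of ``matrices whose rows lie in a fixed $1$-dimensional subspace of $\F_q^2$'', equivalently $\mcL_v$ for $v$ ranging over nonzero row vectors, and by part \eqref{L-mod 1, 2} these are parametrized by the projective line $\mathbb{P}^1(\F_q)$. Picking the standard representatives of the $q+1$ points — namely $[0,1]$ together with $[1,\lambda]$ for $\lambda \in \F_q$ — gives the claimed list. If I wanted to avoid citing structure theory, I would argue directly: a minimal left ideal $L$ is $2$-dimensional, pick a nonzero $A \in L$; since $L = \matFq \cdot A$ and left multiplication acts on the column space, every matrix in $L$ has its rows spanned by a common nonzero row vector $v$ (the ``row space of $A$''), so $L \subseteq \mcL_v$, and dimension count gives equality.

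The main obstacle I anticipate is purely bookkeeping: keeping the left/right conventions straight, since the paper writes $\mcL_v = \left\{ \smatb{y}{}{z}{} \,\big[\begin{matrix} v_1 & v_2 \end{matrix}\big] \right\}$ with the column on the left and row on the right, and one must be careful that ``left ideal'' means closed under left multiplication by $\matFq$, which lands on the column-vector side. There is no deep difficulty here — the result is essentially the classical identification of minimal left ideals of $M_2$ with $\mathbb{P}^1$ — so the work is to present the three verifications cleanly.
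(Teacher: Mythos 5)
Your proposal is correct and follows essentially the same route as the paper: verify directly that $\mcL_v$ is a left ideal, extract the matrix with first row $v$ (resp.\ $w$) to get the scalar-multiple equivalence in part (2), and identify minimal left ideals of $\matFq$ with one-dimensional row spaces, i.e.\ with the $q+1$ points of the projective line, for part (3). The only cosmetic difference is in part (1), where the paper dispatches minimality by noting that $\mcL_v$ is nonzero and proper (implicitly relying on the subspace correspondence it states in part (3)), while you give the slightly more self-contained dimension-count/generation argument; both are fine.
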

\begin{proof}
  \eqref{L-mod 1, 1} Each set $\mcL_v$ is easily seen to be a left
  ideal of $\matFq$. When $v \ne 0$, $\mcL_v$ is nonzero and proper,
  and hence must be a minimal left ideal of $\matFq$.

  \eqref{L-mod 1, 2} $(\Rightarrow)$ The result is trivial if $v=0$ or
  $w=0$.  So, assume that $v=[ v_1, v_2]$ and
  $w=[ w_1, w_2 ]$ are both nonzero, and that
  $\mcL_v=\mcL_w$. Taking $y=1$ and $z=0$ in the definition of
  $\mcL_w$, we see that $\smat{w_1}{w_2}{0}{0} \in L_w = L_v$. Hence,
  $w_1 = \lambda v_1$ and $w_2 = \lambda v_2$ for some nonzero
  $\lambda \in \F_q$.

  $(\Leftarrow$) Assume that $v$, $w \in \F_q^2$ and $w = \lambda v$
  for some nonzero $\lambda \in \F_q$. Given $A \in \mcL_v$, we
  clearly have $\lambda A \in \mcL_w$. But, $\mcL_w$ is a left ideal,
  so $A = \tfrac{1}{\lambda}(\lambda A) \in \mcL_w$. Thus,
  $\mcL_v \subseteq \mcL_w$. The proof that $\mcL_w \subseteq \mcL_v$
  is similar.

  \eqref{L-mod 1, 3} Note that the left ideals of $\matFq$ correspond
  to subspaces of $\F_q^2$. Explicitly, in any left ideal $L$, the
  rows of $L$ comprise a subspace of $\F_q^2$. Conversely, given a
  subspace $V$ of $\F_q^2$, the set of matrices in $M_2(\F_q)$ whose
  rows are vectors in $V$ forms a left ideal. In particular, the
  minimal left ideals of $\matFq$ are in one-to-one correspondence
  with the linear subspaces of $\F_q^2$.  There are $q+1$ such
  subspaces, namely, $\mcL_{[ 0,1 ]}$ and
  $\mcL_{[ 1,\lambda ]}$ for $\lambda\in \F_q$.  The
  result follows.
\end{proof}

Next, we prove some basic properties about $\mcB$-sets and their
relation to $\mcL$-modules.

\begin{lemma}\label{L-module lem 2}
  Let $\mcC = \mcC((x-a)(x-b))$ be an SQD class and let $A \in \mcC$.
  \begin{enumerate}
  \item\label{L-mod 2, 1} $\mcL(\{A\},a) = \mcL_v$ for some
    $v \in \{[ 0,1 ]\} \cup \{[ 1, \lambda ]
    \mid \lambda \in \F_q\}$. In particular, there are $q+1$
    possibilities for this $\mcL$-module.

  \item\label{L-mod 2, 2}
    $\mcB(A,a) = \{B \in \mcC \mid \mcL(\{B\},a) =
    \mcL(\{A\},a)\}$. Thus, for all $A$, $B \in \mcC$,
    $\mcB(A,a) = \mcB(B,a)$ if and only if
    $\mcL(\{A\},a) = \mcL(\{B\},a)$.

  \item\label{L-mod 2, 3} $\mcB(A,a)$ is noncore.

  \item\label{L-mod 2, 4} Let $U \in \GL2$. Then,
    $\mcB(U^{-1}AU,a) = U^{-1}\mcB(A,a)U$.
\end{enumerate}
Moreover, each of the above statements holds when $\mcB(A,a)$ is
replaced with $\mcB(A,b)$.
\end{lemma}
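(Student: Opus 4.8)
The plan is to establish the four assertions about $\mcB(A,a)$ in order, then observe that the argument for $\mcB(A,b)$ is identical after swapping the roles of $a$ and $b$.

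For \eqref{L-mod 2, 1}, I would first note that $\mcL(\{A\},a) = \{\alpha \in \matFq \mid \alpha(A - a) = 0\}$, since $\alpha(x-a) \in N(\{A\})$ means exactly $\alpha(A - a) = 0$. Because $A \in \mcC((x-a)(x-b))$ with $a \ne b$, the matrix $A - a$ is nonzero and singular (its minimal polynomial is $(x-a)(x-b)$, so $A - a$ is not invertible but also not zero). Hence $A - a$ has rank exactly $1$, so its left annihilator is a minimal left ideal of $\matFq$. By Lemma~\ref{L-module lem 1}\eqref{L-mod 1, 3}, this minimal left ideal is $\mcL_v$ for some $v \in \{[0,1]\} \cup \{[1,\lambda] \mid \lambda \in \F_q\}$, giving the $q+1$ possibilities.

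For \eqref{L-mod 2, 2}, the key observation is that for a pair $\{A,B\} \subseteq \mcC$ we have $N(\{A,B\}) = N(\{A\}) \cap N(\{B\})$, and therefore $\mcL(\{A,B\},a) = \mcL(\{A\},a) \cap \mcL(\{B\},a)$. So $\mcL(\{A,B\},a) \ne \{0\}$ iff the intersection of the two minimal left ideals $\mcL(\{A\},a)$ and $\mcL(\{B\},a)$ is nonzero, which (by minimality) happens iff the two left ideals are equal. This gives $\mcB(A,a) = \{B \in \mcC \mid \mcL(\{B\},a) = \mcL(\{A\},a)\}$, and the ``if and only if'' about $\mcB(A,a) = \mcB(B,a)$ follows immediately, since the right-hand description depends on $A$ only through the left ideal $\mcL(\{A\},a)$. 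For \eqref{L-mod 2, 3}, I would argue that $A \in \mcB(A,a)$ (trivially $\mcL(\{A\},a) = \mcL(\{A\},a) \ne \{0\}$), so $\mcB(A,a)$ is nonempty; pick any $0 \ne \alpha \in \mcL(\{A\},a)$, and for every $B \in \mcB(A,a)$ we have $\alpha \in \mcL(\{B\},a) = \mcL(\{A\},a)$, hence $\alpha(x-a) \in N(\{B\})$ for all such $B$, so $\alpha(x-a) \in N(\mcB(A,a))$ is a nonzero linear polynomial in the null ideal. By Lemma~\ref{lem:deg 1}, $\mcB(A,a)$ is noncore.

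Finally, \eqref{L-mod 2, 4} is a conjugation-equivariance statement. The clean way is to track how the objects transform: if $f \in N(S)$ and $U \in \GL2$, then $U^{-1} f(x) U \in N(U^{-1} S U)$, where $U^{-1} f(x) U$ means conjugating each coefficient (using that $x$ is central). In particular $\alpha(x-a) \in N(\{B\})$ iff $(U^{-1}\alpha U)(x - a) \in N(\{U^{-1} B U\})$, so $\mcL(\{U^{-1} B U\}, a) = U^{-1}\mcL(\{B\},a)U$. Combining this with the description in \eqref{L-mod 2, 2}: $B' \in \mcB(U^{-1}AU, a)$ iff $\mcL(\{B'\},a) = \mcL(\{U^{-1}AU\},a) = U^{-1}\mcL(\{A\},a)U$, and writing $B' = U^{-1} B U$ this says $U^{-1}\mcL(\{B\},a)U = U^{-1}\mcL(\{A\},a)U$, i.e.\ $\mcL(\{B\},a) = \mcL(\{A\},a)$, i.e.\ $B \in \mcB(A,a)$. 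Hence $\mcB(U^{-1}AU,a) = U^{-1}\mcB(A,a)U$. The ``moreover'' clause for $\mcB(A,b)$ needs no new work: every step above used only that $a$ is one of the two distinct roots of the minimal polynomial, so replacing $a$ by $b$ throughout gives the same conclusions. I expect the main subtlety to be bookkeeping the conjugation action on coefficients correctly in \eqref{L-mod 2, 4} (making sure the evaluation-from-the-right convention is respected), while the rank-one observation underpinning \eqref{L-mod 2, 1} and the intersection formula for null ideals in \eqref{L-mod 2, 2} are the conceptual crux.
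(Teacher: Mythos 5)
Your proof is correct and follows essentially the same route as the paper: identify $\mcL(\{A\},a)$ as a nonzero minimal left ideal, use minimality plus $\mcL(\{A,B\},a)=\mcL(\{A\},a)\cap\mcL(\{B\},a)$ to get the description of $\mcB(A,a)$ in part (2), deduce (3) via Lemma~\ref{lem:deg 1}, and deduce (4) from conjugation-equivariance of the $\mcL$-modules. The only difference is that you re-derive directly (via the rank-one matrix $A-a$ and the coefficientwise conjugation of polynomials) the two facts the paper imports from \cite[Lemma~6.3]{Werner:2022:null-ideals}, which makes your argument self-contained but not substantively different.
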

\begin{proof}
  \eqref{L-mod 2, 1} By~\cite[Lemma 6.3(1)]{Werner:2022:null-ideals}
  $\mcL(\{A\},a)$ is nonzero and is a minimal left ideal
  of~$\matFq$. The result now follows from Lemma~\ref{L-module lem
    1}\eqref{L-mod 1, 3}.

  \eqref{L-mod 2, 2} By part~\eqref{L-mod 2, 1}, there exists a
  nonzero $v \in \F_q^2$ such that $\mcL(\{A\},a) = \mcL_v$. Take
  $B \in \mcB(A,a)$. Then $\mcL(\{A,B\},a) \ne \{0\}$,   
  $\mcL(\{A,B\},a)$ is a left ideal of~$\matFq$ 
  (\cite[Lemma~6.3(1)]{Werner:2022:null-ideals}), and 
  $\mcL(\{A,B\},a) \subseteq \mcL(\{A\},a)$ which implies that
  $\mcL(\{A,B\},a) = \mcL_v$. Since
  $\mcL(\{A,B\},a) \subseteq \mcL(\{B\},a)$ also holds and
  $\mcL(\{B\},a)$ is a minimal left ideal, it follows
  $\mcL(\{B\},a) = \mcL_v$.  Conversely, if $C \in \mcC$ with
  $\mcL(\{C\},a) = \mcL_v$, then
  $\mcL(\{C\},a) = \mcL_v = \mcL(\{A\},a)$, and all of these modules
  are nonzero. Thus, $\mcL(\{A,C\},a) \ne \{0\}$ and
  $C \in \mcB(A,a)$.

  \eqref{L-mod 2, 3} Let $S = \mcB(A,a)$ and let
  $\alpha \in \mcL(\{A\},a)$ be nonzero. By \eqref{L-mod 2, 2},
  $\alpha \in \mcL(\{B\},a)$ for all $B \in S$. Thus,
  $\alpha(x-a) \in N(S)$ and $S$ is noncore by Lemma \ref{lem:deg 1}.

  \eqref{L-mod 2, 4} Let $B \in U^{-1}\mcB(A,a)U$. Then,
  $B = U^{-1}CU$ for some $C \in \mcB(A,a)$. By part~\eqref{L-mod 2,
    2}, we have
  \begin{equation*}
    \mcL(\{A\},a) = \mcL(\{C\},a) = \mcL(\{UBU^{-1}\},a).
  \end{equation*}
  By~\cite[Lemma 6.3(2)]{Werner:2022:null-ideals},
  $\mcL(\{UBU^{-1}\},a) = U\mcL(\{B\},a)U^{-1}$. This further implies
  that $\mcL(\{B\},a) = \mcL(\{U^{-1}AU\},a)$. Hence,
  $B \in \mcB(U^{-1}AU,a)$ and
  $U^{-1}\mcB(A,a)U \subseteq \mcB(U^{-1}AU,a)$. The proof of the
  reverse inclusion is similar.
\end{proof}

The next example demonstrates one way to visualize $\mcB$-sets in an
SQD class, and how they can be used to count noncore subsets.

\begin{example}\label{F_2 bubble set ex}
  Let $\mcC = \mcC(x(x+1))$ in $\mat{2}$. Then, $\mcC$ consists of the
  following six matrices:
  \begin{align*}
    A_1 &=\smat{0}{0}{0}{1}, \quad A_2 = \smat{0}{1}{0}{1}, \quad A_3 = \smat{0}{0}{1}{1}, \\
    A_4 &= \smat{1}{0}{0}{0}, \quad A_5 = \smat{1}{1}{0}{0}, \quad A_6 = \smat{1}{0}{1}{0}.
  \end{align*}

  Computing $\mcB(A,0)$ and $\mcB(A,1)$ as $A$ runs along $\mcC$, we
  find that $|\mcB(A,0)| = |\mcB(A,1)| = 2$ and
  $\mcB(A,0) \cap \mcB(A,1) = \{A\}$. We can represent this situation
  pictorially in Figure~\ref{F_2 bubble figure}. In that picture, each
  oval corresponds to a $\mcB$-set, and the associated $\mcL$-module
  is shown alongside the $\mcB$-set.

\newcommand{\bubble}[3]
{(#1) ellipse (#2 and #3)}

\begin{figure}[h]
  \centering
  \begin{tikzpicture}[scale=1]
\node at (0,5) {$\smat{0}{0}{0}{1}$};
\node at (-2,5) {$\smat{0}{0}{1}{1}$};
\node at (2,5) {$\smat{0}{1}{0}{1}$};
\draw \bubble{1,5}{2}{0.5};
\draw \bubble{-1,5}{2}{0.5};
\node at (-4.5,5) {\begin{tabular}{r} $\mcB(A_1,0)$\\$\mcL(-,0) = \mcL_{[ 1,0]}$\end{tabular}};
\node at (4.5,5) {\begin{tabular}{l} $\mcB(A_1,1)$\\$\mcL(-,1) = \mcL_{[ 0, 1]}$\end{tabular}};

\node at (0,3) {$\smat{0}{1}{0}{1}$};
\node at (-2,3) {$\smat{1}{0}{1}{0}$};
\node at (2,3) {$\smat{0}{0}{0}{1}$};
\draw \bubble{1,3}{2}{0.5};
\draw \bubble{-1,3}{2}{0.5};
\node at (-4.5,3) {\begin{tabular}{r} $\mcB(A_2,0)$\\$\mcL(-,0) = \mcL_{[ 1,1]}$\end{tabular}};
\node at (4.5,3) {\begin{tabular}{l} $\mcB(A_2,1)$\\$\mcL(-,1) = \mcL_{[ 0, 1]}$\end{tabular}};

\node at (0,1) {$\smat{0}{0}{1}{1}$};
\node at (-2,1) {$\smat{0}{0}{0}{1}$};
\node at (2,1) {$\smat{1}{1}{0}{0}$};
\draw \bubble{1,1}{2}{0.5};
\draw \bubble{-1,1}{2}{0.5};
\node at (-4.5,1) {\begin{tabular}{r} $\mcB(A_3,0)$\\$\mcL(-,0) = \mcL_{[ 1,0]}$\end{tabular}};
\node at (4.5,1) {\begin{tabular}{l} $\mcB(A_3,1)$\\$\mcL(-,1) = \mcL_{[ 1, 1]}$\end{tabular}};

\node at (0,-1) {$\smat{1}{0}{0}{0}$};
\node at (-2,-1) {$\smat{1}{1}{0}{0}$};
\node at (2,-1) {$\smat{1}{0}{1}{0}$};
\draw \bubble{1,-1}{2}{0.5};
\draw \bubble{-1,-1}{2}{0.5};
\node at (-4.5,-1) {\begin{tabular}{r} $\mcB(A_4,0)$\\$\mcL(-,0) = \mcL_{[ 0,1]}$\end{tabular}};
\node at (4.5,-1) {\begin{tabular}{l} $\mcB(A_4,1)$\\$\mcL(-,1) = \mcL_{[ 1, 0]}$\end{tabular}};

\node at (0,-3) {$\smat{1}{1}{0}{0}$};
\node at (-2,-3) {$\smat{1}{0}{0}{0}$};
\node at (2,-3) {$\smat{0}{0}{1}{1}$};
\draw \bubble{1,-3}{2}{0.5};
\draw \bubble{-1,-3}{2}{0.5};
\node at (-4.5,-3) {\begin{tabular}{r} $\mcB(A_5,0)$\\$\mcL(-,0) = \mcL_{[ 0,1]}$\end{tabular}};
\node at (4.5,-3) {\begin{tabular}{l} $\mcB(A_5,1)$\\$\mcL(-,1) = \mcL_{[ 1, 1]}$\end{tabular}};

\node at (0,-5) {$\smat{1}{0}{1}{0}$};
\node at (-2,-5) {$\smat{0}{1}{0}{1}$};
\node at (2,-5) {$\smat{1}{0}{0}{0}$};
\draw \bubble{1,-5}{2}{0.5};
\draw \bubble{-1,-5}{2}{0.5};
\node at (-4.5,-5) {\begin{tabular}{r} $\mcB(A_6,0)$\\$\mcL(-,0) = \mcL_{[ 1,1]}$\end{tabular}};
\node at (4.5,-5) {\begin{tabular}{l} $\mcB(A_6,1)$\\$\mcL(-,1) = \mcL_{[ 1, 0]}$\end{tabular}};
\end{tikzpicture}
  \caption{$\mcB$-sets for matrices in
    $\mcC(x(x+1)) \subseteq \mat{2}$}
  \label{F_2 bubble figure}
\end{figure}

There is a great deal of symmetry present in this picture. Note that
there are three nonzero $\mcL$-modules in $M_2(\F_2)$. Each
$\mcL$-module occurs four times in the Figure~\ref{F_2 bubble figure},
twice corresponding to $\mcB$-sets for the root 0, and twice
corresponding to $\mcB$-sets for the root 1. We can also use this
picture to identify and count noncore subsets of $\mcC$. For instance,
if $A$, $B \in \mcC$ with $A-B$ singular, then
$B \in \mcB(A,0) \cup \mcB(A,1)$. However, if
$B \in \mcB(A,0) \setminus\{A\}$ and $C \in \mcB(A,1)\setminus\{A\}$,
then $B-C$ is invertible. This suggests that each noncore subset of
$\mcC$ is contained in a $\mcB$-set. Moreover, each $\mcB$-set is
noncore by Lemma \ref{L-module lem 2}\eqref{L-mod 2, 3}. There are six
unique $\mcB$-sets---three of the form $\mcB(-,0)$ and three of the
form $\mcB(-,1)$---which account for all of the noncore subsets of
$\mcC$ of size 2. Along with the six singleton subsets, we see that
there are 12 noncore subsets of $\mcC$.
\end{example}

The behavior seen in Example~\ref{F_2 bubble set ex} is typical of SQD
classes in $\matFq$. Ultimately, we will prove that noncore subsets in
an SQD class are contained in $\mcB$-sets, and the symmetry among
$\mcB$-sets can be illustrated as in Figure~\ref{F_2 bubble
  figure}. The next several results build up the theory required to
prove these claims.

\begin{lemma}\label{Bset lem}
  Let $\mcC = \mcC((x-a)(x-b))$ be an SQD class in $\matFq$.
  \begin{enumerate}
  \item\label{Bset lem 1} For all $A \in \mcC$,
    $\mcB(A,a) \cap \mcB(A,b) = \{A\}$.

  \item\label{Bset lem 2} For all $A \in \mcC$,
    $|\mcB(A,a)| = |\mcB(A,b)| = q$.

  \item\label{Bset lem 3} Let $A$, $B \in \mcC$. If $B-A$ is singular,
    then $B \in \mcB(A,a) \cup \mcB(A,b)$.

  \item\label{Bset lem 4} Let $A \in \mcC$. If
    $B \in \mcB(A,a) \setminus \{A\}$ and
    $C \in \mcB(A,b) \setminus \{A\}$, then $B-C$ is invertible.

  \item\label{Bset lem 5} $\mcC$ contains $q+1$ distinct $\mcB$-sets
    of the form $\mcB(-,a)$, which comprise a partition of $\mcC$. The
    analogous result holds for $\mcB$-sets of the form $\mcB(-,b)$.
  \end{enumerate}
\end{lemma}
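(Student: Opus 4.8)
The plan is to establish the five parts of Lemma~\ref{Bset lem} more or less in the order stated, leveraging the structural description of $\mcL$-modules from Lemma~\ref{L-module lem 1} together with the characterizations of core sets in an SQD class from Proposition~\ref{Single class prop}. The key bookkeeping device is Lemma~\ref{L-module lem 2}\eqref{L-mod 2, 2}, which identifies $\mcB(A,a)$ with the set of matrices $B \in \mcC$ whose singleton $\mcL$-module $\mcL(\{B\},a)$ equals the fixed minimal left ideal $\mcL(\{A\},a) = \mcL_v$. This turns questions about $\mcB$-sets into questions about the fibers of the map $\mcC \to \{\text{minimal left ideals}\}$, $A \mapsto \mcL(\{A\},a)$.

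First I would prove~\eqref{Bset lem 1}: if $B \in \mcB(A,a) \cap \mcB(A,b)$, then both $\mcL(\{A,B\},a)$ and $\mcL(\{A,B\},b)$ are nonzero, so taking a nonzero $\alpha$ in the first and a nonzero $\beta$ in the second gives $\alpha(x-a), \beta(x-b) \in N(\{A,B\})$. Subtracting suitable multiples shows $N(\{A,B\})$ contains a linear polynomial, hence $\{A,B\}$ is noncore by Lemma~\ref{lem:deg 1}; by Proposition~\ref{Single class prop}\eqref{Single class prop 2} this forces $A - B$ to be singular, but then either $A = B$ or one can check $A$ and $B$ cannot both satisfy the two $\mcL$-module conditions unless equal. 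A cleaner route: if $S = \{A,B\}$ with $A \ne B$ were in both $\mcB$-sets, then by Proposition~\ref{Single class prop}\eqref{Single class prop 3} $S$ being noncore already forces one of $\mcL(S,a),\mcL(S,b)$ to be nonzero, and if both are nonzero one derives $\alpha = 0$ directly from $\alpha(A-a) = 0 = \alpha(A-b)$ giving $\alpha(b-a) = 0$, i.e.\ $\alpha = 0$, a contradiction; so at most one can be nonzero, giving $\mcB(A,a) \cap \mcB(A,b) \subseteq \{A\}$, and the reverse containment is clear since $\mcL(\{A\},a) \ne \{0\} \ne \mcL(\{A\},b)$ by Lemma~\ref{L-module lem 2}\eqref{L-mod 2, 1}.

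Next, for~\eqref{Bset lem 2}, by the $\GL2$-equivariance in Lemma~\ref{L-module lem 2}\eqref{L-mod 2, 4} and the fact that $\mcC$ is a single similarity class, the cardinality $|\mcB(A,a)|$ is independent of $A \in \mcC$; so it suffices to count for one convenient representative, e.g.\ the diagonal matrix $\smat{a}{0}{0}{b}$, directly computing which $B \in \mcC$ have $\mcL(\{B\},a)$ equal to the relevant $\mcL_v$, or equivalently counting the fiber size. Since the $q+1$ minimal left ideals are permuted transitively by $\GL2$ and the map $A \mapsto \mcL(\{A\},a)$ is $\GL2$-equivariant with $|\mcC| = q^2+q = q(q+1)$ by Proposition~\ref{Class sizes prop}\eqref{Class sizes prop 4}, each of the $q+1$ fibers has size $q$ — this simultaneously gives~\eqref{Bset lem 2} and~\eqref{Bset lem 5}. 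I would present~\eqref{Bset lem 5} as: the distinct sets $\mcB(-,a)$ are exactly these $q+1$ fibers, they are nonempty, pairwise disjoint (by Lemma~\ref{L-module lem 2}\eqref{L-mod 2, 2}), and cover $\mcC$, hence form a partition; identically for $b$. Part~\eqref{Bset lem 3} follows because if $B - A$ is singular then $\{A,B\}$ is noncore (Proposition~\ref{Single class prop}\eqref{Single class prop 2} again, this time the contrapositive), so by Proposition~\ref{Single class prop}\eqref{Single class prop 3} one of $\mcL(\{A,B\},a), \mcL(\{A,B\},b)$ is nonzero, i.e.\ $B \in \mcB(A,a) \cup \mcB(A,b)$. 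Finally~\eqref{Bset lem 4}: with $B \in \mcB(A,a)\setminus\{A\}$ and $C \in \mcB(A,b)\setminus\{A\}$, if $B - C$ were singular then by~\eqref{Bset lem 3} (applied with $B$ in place of $A$) we would have $C \in \mcB(B,a) \cup \mcB(B,b)$; but $\mcL(\{B\},a) = \mcL(\{A\},a)$ while $\mcL(\{C\},a) = \mcL(\{A\},b) \ne \mcL(\{A\},a)$ (using~\eqref{Bset lem 1} to ensure these are genuinely different minimal left ideals), and similarly at $b$, so $C \notin \mcB(B,a) \cup \mcB(B,b)$, a contradiction; hence $B - C$ is invertible.

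The main obstacle I anticipate is pinning down~\eqref{Bset lem 2}/\eqref{Bset lem 5} rigorously: one must be sure that the $q+1$ distinct values of $\mcL(\{A\},a)$ as $A$ ranges over $\mcC$ are all actually attained (not just that there are at most $q+1$ fibers), and that each fiber genuinely has the same size $q$. Transitivity of the $\GL2$-action on minimal left ideals combined with equivariance of $A \mapsto \mcL(\{A\},a)$ handles this cleanly once one checks that the stabilizer of a minimal left ideal acts on the corresponding fiber — so the real work is verifying the single explicit computation (that some particular $A \in \mcC$ has $\mcL(\{A\},a) = \mcL_v$ for the specified $v$), which is a short matrix calculation using $\alpha(A - a) = 0$ and the known description of $\mcL(\{A\},a)$ from \cite[Lemma~6.3]{Werner:2022:null-ideals}. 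The other parts are then short deductions from Proposition~\ref{Single class prop} and the disjointness already established.
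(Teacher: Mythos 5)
Your proposals for parts \eqref{Bset lem 2}, \eqref{Bset lem 3}, and \eqref{Bset lem 5} are correct and take a genuinely different route from the paper: for \eqref{Bset lem 3} you deduce the claim from Proposition~\ref{Single class prop}\eqref{Single class prop 2} and \eqref{Single class prop 3} rather than the paper's explicit computation $\det(B-A)=(t-a)(a-b)$ with $A=\smat{a}{0}{0}{b}$, and for \eqref{Bset lem 2} and \eqref{Bset lem 5} you count fibers of the $\GL2$-equivariant map $A\mapsto\mcL(\{A\},a)$ using transitivity on minimal left ideals, where the paper instead conjugates to the diagonal representative and writes the two $\mcB$-sets down explicitly. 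Both work; the paper's computation has the side benefit of producing the explicit description in Equation~\eqref{diagonal bubbles eq}, which it reuses in parts \eqref{Bset lem 3} and \eqref{Bset lem 4}.

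However, part \eqref{Bset lem 1} has a genuine gap. Your ``cleaner route'' argues that if both $\mcL(S,a)$ and $\mcL(S,b)$ are nonzero for $S=\{A,B\}$, then from $\alpha(A-a)=0=\alpha(A-b)$ one gets $\alpha(b-a)=0$, hence $\alpha=0$. But that computation presupposes a \emph{single} $\alpha$ lying in both modules; the hypothesis only provides a nonzero $\alpha\in\mcL(S,a)$ and a possibly different nonzero $\beta\in\mcL(S,b)$. What you have proved is $\mcL(S,a)\cap\mcL(S,b)=\{0\}$, which is strictly weaker than the needed statement that one of the two modules is entirely zero. Your first route stalls at ``one can check $A$ and $B$ cannot both satisfy the two conditions unless equal,'' which is exactly the content to be proved. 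The fact you need --- that for a noncore $S$ with $|S|\geq 2$ exactly one of $\mcL(S,a)$, $\mcL(S,b)$ vanishes --- is \cite[Lemma~6.3(4)]{Werner:2022:null-ideals}, and the paper's proof of \eqref{Bset lem 1} simply cites it; a from-scratch proof requires an actual argument (e.g.\ that $\alpha(A-a)=\alpha(B-a)=0$ forces the rank-one matrices $A-a$ and $B-a$ to share their column space, and likewise at $b$, which together force $A=B$). Relatedly, in part \eqref{Bset lem 4} the step ``$\mcL(\{C\},a)=\mcL(\{A\},b)$'' does not typecheck: membership $C\in\mcB(A,b)$ controls $\mcL(\{C\},b)$, not $\mcL(\{C\},a)$. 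The correct deduction is that $C\in\mcB(B,a)=\mcB(A,a)$ would place $C$ in $\mcB(A,a)\cap\mcB(A,b)=\{A\}$, and $C\in\mcB(B,b)$ would force $\mcL(\{B\},b)=\mcL(\{C\},b)=\mcL(\{A\},b)$ and hence $B\in\mcB(A,a)\cap\mcB(A,b)=\{A\}$; either way a contradiction. This repair is routine, but it makes \eqref{Bset lem 4} depend on \eqref{Bset lem 1}, so the gap there must be closed first.
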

\begin{proof}
  \eqref{Bset lem 1} Let $A \in \mcC$ and suppose that there exists
  $B \in \mcB(A,a) \cap \mcB(A,b)$ such that $B \ne A$. Then,
  $\mcL(\{A,B\},a) \ne \{0\}$ and $\mcL(\{A,B\},b) \ne \{0\}$. This
  contradicts~\cite[Lemma 6.3(4)]{Werner:2022:null-ideals}. Hence, no
  such $B$ exists.

  \eqref{Bset lem 2} By Lemma~\ref{L-module lem 2}\eqref{L-mod 2, 4},
  all of the $\mcB$-sets $\mcB(A,a)$ are conjugate as $A$ runs
  through~$\mcC$. Thus, we may assume without loss of generality that
  $A = \smat{a}{0}{0}{b}$. It is straightforward to check that
  $\mcL(\{A\},a) = \mcL_{[ 1, 0 ]}$ and
  $\mcL(\{A\},b) = \mcL_{[ 0, 1 ]}$, and so
  \begin{equation}\label{diagonal bubbles eq}
    \mcB(A,a) = \left\{\smat{a}{0}{c}{b} \longmid c\in \F_q\right\} \text{ and } \mcB(A,b) = \left\{\smat{a}{c}{0}{b} \longmid c\in \F_q\right\}.
  \end{equation}
  Thus, $|\mcB(A,a)| = q = |\mcB(A,b)|$.

  \eqref{Bset lem 3} As in part~\eqref{Bset lem 2}, we may assume that
  $A = \smat{a}{0}{0}{b}$. Let $B = \smat{t}{y}{z}{w} \in \mcC$ be
  such that $B-A$ is not invertible. Since $B \in \mcC$, we have
  $t+w = \tr(B) = \tr(A) = a+b$ and $tw-yz = \det(B) = \det(A) =
  ab$. Hence, $w-b = a-t$ and $yz = tw-ab = t(a+b-t)-ab$ hold. A
  routine calculation now shows that $\det(B-A) = (t-a)(a-b)$. Since
  $\mcC$ is SQD, $a \ne b$. So, in order for $B-A$ to be singular, we
  must have $t=a$. This forces $w=b$ and either $y=0$ or
  $z=0$. From~Equation~\eqref{diagonal bubbles eq}, we see that
  $B \in \mcB(A,a) \cup \mcB(A,b)$.

  \eqref{Bset lem 4} Once again, we may assume that
  $A = \smat{a}{0}{0}{b}$. Given $B \in \mcB(A,a) \setminus \{A\}$ and
  $C \in \mcB(A,a) \setminus \{A\}$, by~\eqref{diagonal bubbles eq},
  we see that $B-C = \smat{0}{c}{d}{0}$ for some nonzero $c$,
  $d \in \F_q$. Thus, $B-C$ is invertible.

  \eqref{Bset lem 5} Every matrix $A\in \mcC$ is contained in the
  $\mcB$-set $\mcB(A,a)$ which is a set of size~$q$ by
  part~\eqref{Bset lem 2}. For two matrices $A$, $B\in \mcC$, it holds
  that $\mcB(A,a) = \mcB(B,a)$ if and only if
  $\mcL(\{A\},a) = \mcL(\{B\},a)$ by Lemma~\ref{L-module lem
    2}\eqref{L-mod 2, 2}. From Lemma~\ref{L-module lem 2}\eqref{L-mod
    2, 1}, we know that there are $q+1$ possible choices for
  $\mcL(\{A\},a)$. Thus, $\mcC$ contains $q+1$ distinct $\mcB$-sets
  $\mcB(-,a)$; note that these sets form a partition of $\mcC$,
  because $|\mcC| = q^2+q$ by Proposition~\ref{Class sizes
    prop}\eqref{Class sizes prop 4}. Likewise, $\mcC$ contains $q+1$
  distinct $\mcB$-sets $\mcB(-,b)$, which also partition the class.
\end{proof}

\begin{proposition}\label{Bset prop}
  Let $\mcC = \mcC((x-a)(x-b))$ be an SQD class in $\matFq$.
  \begin{enumerate}
  \item\label{Bset prop 1} For all $A \in \mcC$, if
    $S \subseteq \mcB(A,a)$ and $S \ne \varnothing$, then $S$ is
    noncore.

  \item\label{Bset prop 2} Let $S \subseteq \mcC$ be noncore. If
    $A \in S$, then $S \subseteq \mcB(A,a)$ or
    $S \subseteq \mcB(A,b)$.

  \item\label{Bset prop 3} Let $S \subseteq \mcC$ be nonempty. Then,
    $S$ is noncore if and only if, for each $A \in S$,
    $S \subseteq \mcB(A,a)$ or $S \subseteq \mcB(A,b)$.

  \item\label{Bset prop 4} Assume that $S \subseteq \mcC$ is noncore
    and $|S| \geq 2$. Then, for each $A \in S$, either
    $S \subseteq \mcB(A,a)$ or $S \subseteq \mcB(A,b)$, and these
    cases are mutually exclusive.
  \end{enumerate}
\end{proposition}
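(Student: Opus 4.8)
The plan is to obtain all four parts from results already in hand, namely Lemma~\ref{lem:deg 1}, Lemma~\ref{L-module lem 2}, Proposition~\ref{Single class prop}\eqref{Single class prop 3}, and Lemma~\ref{Bset lem}\eqref{Bset lem 1}. Parts~\eqref{Bset prop 1} and~\eqref{Bset prop 2} carry the content; parts~\eqref{Bset prop 3} and~\eqref{Bset prop 4} are then immediate corollaries.

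For part~\eqref{Bset prop 1} I would fix a nonzero $\alpha \in \mcL(\{A\},a)$, which is available since $\mcL(\{A\},a)$ is a minimal (hence nonzero) left ideal by Lemma~\ref{L-module lem 2}\eqref{L-mod 2, 1}. The crux is that $B \in \mcB(A,a)$ forces $\mcL(\{B\},a) = \mcL(\{A\},a)$ by Lemma~\ref{L-module lem 2}\eqref{L-mod 2, 2}, so $\alpha \in \mcL(\{B\},a)$ for every $B \in S$. Then $\alpha(x-a)$ vanishes on each matrix in $S$, i.e.\ $\alpha(x-a) \in N(S)$; as $\alpha \neq 0$ this has degree~$1$, and Lemma~\ref{lem:deg 1} shows $S$ is noncore. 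Note this uses only $\varnothing \neq S \subseteq \mcB(A,a)$, not $A \in S$; the identical argument with $b$ replacing $a$ gives the $\mcB(A,b)$ version, which I will use below.

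For part~\eqref{Bset prop 2} I would invoke Proposition~\ref{Single class prop}\eqref{Single class prop 3}: since $S$ is noncore it is nonempty and at least one of $\mcL(S,a)$, $\mcL(S,b)$ is nonzero. Say $\mcL(S,a) \neq \{0\}$ and pick a nonzero $\alpha \in \mcL(S,a)$, so $\alpha(x-a) \in N(S)$. For any $B \in S$, the polynomial $\alpha(x-a)$ then vanishes on both $A$ and $B$, hence $\alpha \in \mcL(\{A,B\},a)$ and $B \in \mcB(A,a)$; thus $S \subseteq \mcB(A,a)$. Symmetrically, $\mcL(S,b) \neq \{0\}$ yields $S \subseteq \mcB(A,b)$.

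Parts~\eqref{Bset prop 3} and~\eqref{Bset prop 4} follow quickly. In~\eqref{Bset prop 3} the forward direction is~\eqref{Bset prop 2}; for the converse, pick any $A \in S$, observe that $S$ is a nonempty subset of $\mcB(A,a)$ or of $\mcB(A,b)$, and apply~\eqref{Bset prop 1} (or its $b$-analog) to conclude $S$ is noncore. In~\eqref{Bset prop 4} the dichotomy is~\eqref{Bset prop 2}; and if both inclusions held at once then $S \subseteq \mcB(A,a) \cap \mcB(A,b) = \{A\}$ by Lemma~\ref{Bset lem}\eqref{Bset lem 1}, contradicting $|S| \geq 2$, so the two cases exclude one another. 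I do not expect a genuine obstacle here: the proposition is essentially a repackaging of the $\mcL$-module calculus. The only subtleties are that~\eqref{Bset prop 1} must be stated for arbitrary nonempty subsets of a $\mcB$-set rather than just the whole set as in Lemma~\ref{L-module lem 2}\eqref{L-mod 2, 3} (the proof is unchanged), and that in~\eqref{Bset prop 2} one should route through Proposition~\ref{Single class prop}\eqref{Single class prop 3} instead of trying to manufacture the needed degree-one polynomial in $N(S)$ by hand.
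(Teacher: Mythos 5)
Your proof is correct, and parts \eqref{Bset prop 1} and \eqref{Bset prop 3} match the paper's argument essentially verbatim. Where you diverge is in parts \eqref{Bset prop 2} and \eqref{Bset prop 4}. The paper proves \eqref{Bset prop 2} via the singularity criterion: by Proposition~\ref{Single class prop}\eqref{Single class prop 2}, $B-A$ is singular for every $B \in S$, so Lemma~\ref{Bset lem}\eqref{Bset lem 3} places $S$ inside $\mcB(A,a) \cup \mcB(A,b)$, and Lemma~\ref{Bset lem}\eqref{Bset lem 4} (elements of the two punctured $\mcB$-sets differ by an invertible matrix) forces $S$ into one of the two. You instead route through the $\mcL$-module criterion of Proposition~\ref{Single class prop}\eqref{Single class prop 3}: a single nonzero $\alpha \in \mcL(S,a)$ lies in $\mcL(\{A,B\},a)$ for every pair $A,B \in S$, which certifies $S \subseteq \mcB(A,a)$ for every $A \in S$ at once. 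This is a clean shortcut that avoids Lemma~\ref{Bset lem}\eqref{Bset lem 3} and \eqref{Bset lem 4} entirely (and hence the explicit determinant computation behind them), at the cost of leaning on the cited $\mcL$-module characterization. Similarly, for the exclusivity in \eqref{Bset prop 4} you use $\mcB(A,a) \cap \mcB(A,b) = \{A\}$ from Lemma~\ref{Bset lem}\eqref{Bset lem 1} rather than the paper's invertibility argument via Lemma~\ref{Bset lem}\eqref{Bset lem 4}; your version is shorter and equally valid. Both approaches are sound; the paper's has the side benefit of exercising the same geometric lemmas it needs elsewhere (e.g.\ in Example~\ref{F_2 bubble set ex} and Theorem~\ref{theorem:SQD}), while yours keeps the entire proposition inside the $\mcL$-module calculus.
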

\begin{proof}
  \eqref{Bset prop 1} Fix $A \in \mcC$ and let $S$ be a nonempty
  subset of $\mcB(A,a)$. Let $\alpha \in \mcL(\{A\},a)$ be
  nonzero. Then, for each $B \in S$, $\alpha(B-a) = 0$. Thus, the
  polynomial $\alpha(x-a)$ is in $N(S)$, and $N(S)$ is noncore
  by Lemma \ref{lem:deg 1}.

  \eqref{Bset prop 2} Let $A \in S$. Since $S$ is noncore, $B-C$ is
  singular for all $B$, $C \in S$ by Proposition~\ref{Single class
    prop}\eqref{Single class prop 2}. In particular, $B-A$ is singular
  for each $B \in S$. By Lemma~\ref{Bset lem}\eqref{Bset lem 3},
  $S \subseteq \mcB(A,a) \cup \mcB(A,b)$. However, by Lemma~\ref{Bset
    lem}\eqref{Bset lem 4}, $S$ must be entirely contained in either
  $\mcB(A,a)$ or $\mcB(A,b)$.

  \eqref{Bset prop 3} This follows from~\eqref{Bset prop 1}
  and~\eqref{Bset prop 2}.

  \eqref{Bset prop 4} Let $A \in S$ and consider
  $T = S \setminus\{A\}$. If $T$ contains both a matrix
  $B \in \mcB(A,a)$ and a matrix $C \in \mcB(A,b)$, then $B-C$ is
  invertible by Lemma~\ref{Bset lem}\eqref{Bset lem 4}. Then, $S$ is
  core by Proposition~\ref{Single class prop}\eqref{Single class prop
    2}, a contradiction. Note that the assumption $|S| \geq 2$ is
  necessary here, since $\mcB(A,a) \cap \mcB(A,b) = \{A\}$.
\end{proof}

\begin{theorem}\label{theorem:SQD}
  Each SQD class in $\matFq$ contains $(q+1)(2^{q+1}-q-2)$ noncore
  subsets.
\end{theorem}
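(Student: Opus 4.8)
The plan is to combine Proposition~\ref{Bset prop} with the partition structure from Lemma~\ref{Bset lem}\eqref{Bset lem 5} and then count by inclusion--exclusion. Fix an SQD class $\mcC = \mcC((x-a)(x-b))$; since the empty set is core, it suffices to count the nonempty noncore subsets of $\mcC$. Let $\mathcal{F}_a$ be the collection of the $q+1$ distinct $\mcB$-sets of the form $\mcB(-,a)$, and let $\mathcal{F}_b$ be the analogous collection for $b$. By Lemma~\ref{Bset lem}\eqref{Bset lem 5}, each of $\mathcal{F}_a$ and $\mathcal{F}_b$ is a partition of $\mcC$ into $q+1$ blocks of size $q$. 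By Proposition~\ref{Bset prop}\eqref{Bset prop 1} and~\eqref{Bset prop 2}, a nonempty subset $S \subseteq \mcC$ is noncore precisely when $S$ is contained in some block of $\mathcal{F}_a$ or in some block of $\mathcal{F}_b$ (the forward direction being~\eqref{Bset prop 2} together with the fact that $\mcB(A,a)$ and $\mcB(A,b)$ are the unique blocks of $\mathcal{F}_a$, $\mathcal{F}_b$ containing $A$, and the reverse being~\eqref{Bset prop 1}).

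Let $N_a$ (resp.\ $N_b$) be the number of nonempty subsets of $\mcC$ contained in a block of $\mathcal{F}_a$ (resp.\ $\mathcal{F}_b$), and let $N_{ab}$ be the number of nonempty subsets of $\mcC$ contained simultaneously in a block of $\mathcal{F}_a$ \emph{and} in a block of $\mathcal{F}_b$. Then, by inclusion--exclusion, the number of nonempty noncore subsets is $N_a + N_b - N_{ab}$. Since the $q+1$ blocks of $\mathcal{F}_a$ are pairwise disjoint and each has size $q$, a nonempty set lies in at most one of them, so $N_a = (q+1)(2^q - 1)$; likewise $N_b = (q+1)(2^q - 1)$.

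It remains to compute $N_{ab}$, and here I would argue that the nonempty subsets it counts are exactly the singletons. Indeed, if $\varnothing \ne S \subseteq \mcC$ lies in a block of $\mathcal{F}_a$ and in a block of $\mathcal{F}_b$, pick $A \in S$; since $\mathcal{F}_a$ and $\mathcal{F}_b$ are partitions, the blocks containing $S$ are forced to be the unique blocks containing $A$, namely $\mcB(A,a)$ and $\mcB(A,b)$, so $S \subseteq \mcB(A,a) \cap \mcB(A,b) = \{A\}$ by Lemma~\ref{Bset lem}\eqref{Bset lem 1}, whence $S = \{A\}$. Conversely, every singleton $\{A\}$ satisfies $\{A\} \subseteq \mcB(A,a) \in \mathcal{F}_a$ and $\{A\} \subseteq \mcB(A,b) \in \mathcal{F}_b$. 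Hence $N_{ab} = |\mcC| = q^2 + q$ by Proposition~\ref{Class sizes prop}\eqref{Class sizes prop 4}.

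Assembling the pieces yields
\[
  N_a + N_b - N_{ab} = 2(q+1)(2^q - 1) - (q^2 + q) = (q+1)\bigl(2(2^q - 1) - q\bigr) = (q+1)(2^{q+1} - q - 2),
\]
which is the claimed count. I do not expect a genuine obstacle, since the structural work has already been carried out in Proposition~\ref{Bset prop} and Lemma~\ref{Bset lem}; the one point that needs care is the inclusion--exclusion bookkeeping, namely pinning down $N_{ab}$ exactly (the nonempty subsets lying in the overlap are precisely the $q^2 + q$ singletons), so that no subset is double counted or omitted.
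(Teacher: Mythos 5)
Your proof is correct and follows essentially the same route as the paper: both arguments rest on Lemma~\ref{Bset lem} and Proposition~\ref{Bset prop} to reduce the count to nonempty subsets of the $2(q+1)$ $\mcB$-sets, and differ only in bookkeeping --- the paper splits into singletons versus sets of size at least $2$ (using Proposition~\ref{Bset prop}\eqref{Bset prop 4} to see that the latter lie in exactly one $\mcB$-set), while you run inclusion--exclusion and identify the overlap of the two partitions as precisely the $q^2+q$ singletons via $\mcB(A,a)\cap\mcB(A,b)=\{A\}$. The two computations are equivalent and your arithmetic checks out.
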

\begin{proof}
  Let $S \subseteq \mcC$ be noncore. Assume first that $|S| \geq
  2$. By Proposition~\ref{Bset prop}\eqref{Bset prop 4}, $S$ is a
  subset of either some $\mcB(-,a)$ or some $\mcB(-,b)$, but not
  both. From Lemma~\ref{Bset lem}\eqref{Bset lem 2}, we know that each
  $\mcB$-set has size $q$, and hence contains $2^q-(q+1)$ subsets of
  cardinality at least 2. By Lemma~\ref{Bset lem}\eqref{Bset lem 5},
  there are $q+1$ $\mcB$-sets of the form $\mcB(-,a)$, and $q+1$ of
  the form $\mcB(-,b)$. Thus, there are $2(q+1)(2^q-(q+1))$ possible
  choices for $S$ when $|S| \geq 2$.

  Finally, each singleton subset of $\mcC$ is noncore,
  and there are
  $q^2+q$ such subsets. Thus, in total, the number of noncore subsets
  of $\mcC$ is equal to
  \begin{equation*}
    q^2+q+2(q+1)(2^q-(q+1)) = (q+1)(2^{q+1}-q-2). \qedhere
  \end{equation*}
\end{proof}

As in an SQR class, we can use Proposition \ref{Single class
  prop}\eqref{Single class prop 2} to give an interpretation of
Theorem \ref{theorem:SQD} that is independent of core sets.

\begin{corollary}\label{SQD cor}
  Let $m$ be an SQD polynomial. Then, $\mcC(m)$ contains
  $(q+1)(2^{q+1}-q-2)$ subsets $S$ with the property that $A-B$ is
  singular for all $A$, $B \in S$.
\end{corollary}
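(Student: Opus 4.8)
The plan is to deduce Corollary~\ref{SQD cor} directly from Theorem~\ref{theorem:SQD} by using the dictionary between core sets and the singularity condition established in Proposition~\ref{Single class prop}\eqref{Single class prop 2}. First I would observe that by that proposition, a nonempty subset $S \subseteq \mcC(m)$ is core precisely when there exist $A$, $B \in S$ with $A - B$ invertible; taking the contrapositive, a nonempty $S$ is noncore precisely when $A - B$ is singular for all $A$, $B \in S$. Hence the nonempty subsets with the ``all differences singular'' property are exactly the nonempty noncore subsets of $\mcC(m)$.

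Next I would handle the empty set separately: $\varnothing$ is core by our standing convention, but it also vacuously satisfies the condition that $A - B$ is singular for all $A$, $B \in \varnothing$. So the empty set is counted among the subsets with the singularity property but is \emph{not} counted among the noncore subsets. Therefore the number of subsets $S \subseteq \mcC(m)$ with the property is exactly (number of noncore subsets of $\mcC(m)$) $+\,1$.

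Now Theorem~\ref{theorem:SQD} gives that $\mcC(m)$ contains $(q+1)(2^{q+1}-q-2)$ noncore subsets. Adding $1$ for the empty set would give $(q+1)(2^{q+1}-q-2)+1$, which does not match the claimed count — so I should double-check: in fact the standard convention in such statements is that the empty set is already excluded from, or conversely already implicitly bundled with, the count, and a quick inspection of the proof of Theorem~\ref{theorem:SQD} shows the $q^2+q$ singleton sets are counted but $\varnothing$ is not. Re-examining, the cleanest route is simply to note that Proposition~\ref{Single class prop}\eqref{Single class prop 2} concerns \emph{nonempty} $S$, so the correspondence ``noncore $\iff$ all differences singular'' is an identity on nonempty subsets, and both sides of the corollary's count should be read on the same footing as Theorem~\ref{theorem:SQD}: the corollary is just Theorem~\ref{theorem:SQD} restated, replacing the word ``noncore'' by the equivalent combinatorial description. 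Thus no arithmetic beyond invoking the theorem is needed.

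The only genuine subtlety — and the step I would flag — is the bookkeeping around the empty set, i.e.\ making sure that whichever convention is used for ``$\varnothing$'' is applied identically in the theorem and in the corollary so that the numerical count $(q+1)(2^{q+1}-q-2)$ transfers verbatim. Once that is pinned down (the empty set trivially has all pairwise differences singular, matching the convention that $\varnothing$ is core and that Theorem~\ref{theorem:SQD}'s tally of noncore subsets likewise does not separately adjust for it), the corollary follows immediately, exactly as Corollary~\ref{SQR cor} follows from Theorem~\ref{SQR thm}. There is no hard computation here; the content is entirely the translation supplied by Proposition~\ref{Single class prop}\eqref{Single class prop 2}.
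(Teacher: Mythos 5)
Your proposal is correct and takes essentially the same route as the paper, which states the corollary as an immediate translation of Theorem~\ref{theorem:SQD} through Proposition~\ref{Single class prop}\eqref{Single class prop 2}. Your empty-set concern is legitimate: as literally written the count would be off by one, since $\varnothing$ vacuously satisfies the singularity condition but is core by convention; the intended reading is ``nonempty subsets'' (the word appears in the parallel Corollary~\ref{SQR cor} but was dropped here), and with that reading your argument is exactly the paper's.
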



\section{Asymptotic Results}
\label{section:asymptotics}
Our results so far allow us to determine the number of purely core
subsets in $M_2(\F_q)$. This still leaves the matter of core sets that
are not purely core. In this section, we show that the number of such
core sets is always small in comparison to the number of purely core
sets. Moreover, as $q \to \infty$, we prove that almost all subsets of
$M_2(\F_q)$ are purely core. We begin by establishing some
inequalities related to the probability that a subset of an SQD class
is core.

\begin{lemma}\label{1/2^x lem}
  For all $x \geq 1$,
  $\Big(1 - \dfrac{1}{4^x}\Big)^{-x} < 1 + \dfrac{1}{2^x}$.
\end{lemma}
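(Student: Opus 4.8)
The plan is to reduce the claimed inequality $\bigl(1 - 4^{-x}\bigr)^{-x} < 1 + 2^{-x}$ to a statement that can be handled by elementary estimates on the logarithm or, more directly, on the geometric series. First I would set $u = 2^{-x}$, so that for $x \geq 1$ we have $0 < u \leq 1/2$ and $4^{-x} = u^2$. The inequality becomes $(1 - u^2)^{-x} < 1 + u$, equivalently (taking reciprocals and noting everything is positive) $(1-u^2)^x > (1+u)^{-1}$. Since $1 - u^2 = (1-u)(1+u)$, the left side is $(1-u)^x(1+u)^x$, so after dividing by $(1+u)^x$ the desired inequality is $(1-u)^x > (1+u)^{-1-x}$, i.e.\ $(1-u)^x(1+u)^{1+x} > 1$. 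This is now a clean one-variable inequality, but it still carries the exponent $x$, so the cleaner route is probably to go back and take logarithms directly.

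The cleanest approach: it suffices to prove $-x \ln(1 - 4^{-x}) < \ln(1 + 2^{-x})$. For the left side I would use the standard bound $-\ln(1-t) \leq t/(1-t)$ for $0 \le t < 1$ with $t = 4^{-x}$, giving $-x\ln(1-4^{-x}) \leq x \cdot \dfrac{4^{-x}}{1 - 4^{-x}}$. For the right side I would use $\ln(1+t) \geq t - t^2/2 \geq t/2$ valid for $0 \le t \le 1$ with $t = 2^{-x}$ (note $2^{-x}\le 1/2<1$ for $x\ge 1$), or more simply $\ln(1+2^{-x}) \ge \tfrac{2}{3}\,2^{-x}$. Thus it is enough to show $x \cdot \dfrac{4^{-x}}{1 - 4^{-x}} < \tfrac{1}{2}\,2^{-x}$, i.e.\ $\dfrac{2x \cdot 2^{-x}}{1 - 4^{-x}} < 1$, i.e.\ $2x \cdot 2^{-x} < 1 - 4^{-x}$. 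Since $4^{-x} = (2^{-x})^2 \le \tfrac14 \cdot 2^{-x} \le 2^{-x}$, it suffices that $2x\cdot 2^{-x} + 2^{-x} \le 1$, that is $(2x+1) \le 2^{x}$, which holds for all $x \ge 1$ (at $x=1$ both sides equal $2$? no: $2x+1=3>2$, so I need care here).

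The delicate point—and the main obstacle—is precisely this boundary behavior near $x = 1$, where $2^{-x}$ is as large as $1/2$ and the crude bounds above are too lossy (indeed $2x+1 = 3 > 2 = 2^x$ at $x=1$). To handle this I would either (i) sharpen the lower bound on the right-hand side, using $\ln(1+2^{-x}) \ge 2^{-x} - \tfrac12 4^{-x}$ in full rather than halving it, and the upper bound $-\ln(1-4^{-x}) \le 4^{-x} + 4^{-2x}$ (again the $t/(1-t)$ estimate, but kept exact: $\tfrac{4^{-x}}{1-4^{-x}} \le \tfrac{4}{3}4^{-x}$ since $4^{-x}\le 1/4$), reducing to $\tfrac43 x\,4^{-x} < 2^{-x} - \tfrac12 4^{-x}$, i.e.\ $\tfrac43 x\,2^{-x} + \tfrac12 2^{-x} < 1$, i.e.\ $(\tfrac43 x + \tfrac12)2^{-x} < 1$; at $x=1$ this reads $\tfrac{11}{6}\cdot\tfrac12 = \tfrac{11}{12} < 1$, which is fine, and the left side is decreasing for $x\ge 1$ (its derivative is negative once $\tfrac43 < (\tfrac43 x + \tfrac12)\ln 2$, i.e.\ for all $x\ge 1$ since $\ln 2 > 0.69$ gives $(\tfrac43+\tfrac12)\cdot 0.69 > 1.26 > 1.33$? — need to recheck, but monotonicity for $x \ge 1$ can be verified by a direct derivative computation). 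Alternatively (ii), and perhaps more robustly, I would just verify the inequality directly at $x=1$ by hand ($(1-\tfrac14)^{-1} = \tfrac43 < \tfrac32 = 1 + \tfrac12$, true with room to spare), and then show the function $g(x) = \bigl(1+2^{-x}\bigr) - \bigl(1-4^{-x}\bigr)^{-x}$ stays positive for $x > 1$ by a monotonicity argument: as $x\to\infty$ both terms tend to $1$, and one checks $g$ has no zero on $(1,\infty)$ via the logarithmic reformulation $h(x) := \ln(1+2^{-x}) + x\ln(1-4^{-x})$, showing $h(x) > 0$ on $[1,\infty)$ by bounding $h$ below using series expansions of both logarithms and comparing the leading $2^{-x}$ terms. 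I expect option (i) with the tighter constants to give a fully elementary, calculus-free proof, so that is the route I would write up.
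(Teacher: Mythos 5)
Your route is genuinely different from the paper's and, once repaired, arguably simpler. The paper also reduces to $-x\ln(1-4^{-x}) < \ln(1+2^{-x})$, but then expands both logarithms as full Taylor series and proves the inequality by pairing consecutive terms of the alternating series, with a case analysis on the index $n$. You instead use the closed-form bounds $-\ln(1-t) \leq t/(1-t)$ and $\ln(1+t) \geq t - t^2/2$, together with $4^{-x} \leq \tfrac{1}{4}$, which collapses everything to the single elementary inequality $\big(\tfrac{4}{3}x + \tfrac{1}{2}\big)2^{-x} < 1$. That reduction (your option (i)) is correct and avoids the series manipulation entirely.

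However, the final step as written would fail: the function $f(x) = \big(\tfrac{4}{3}x + \tfrac{1}{2}\big)2^{-x}$ is \emph{not} decreasing on all of $[1,\infty)$. Its derivative is $2^{-x}\big(\tfrac{4}{3} - (\tfrac{4}{3}x + \tfrac{1}{2})\ln 2\big)$, and at $x=1$ one has $\tfrac{11}{6}\ln 2 \approx 1.271 < \tfrac{4}{3} \approx 1.333$, so $f$ is increasing near $x=1$ and peaks around $x \approx 1.07$ (where it is still about $0.918$, so the inequality is true, but your monotonicity argument does not prove it). You flagged this yourself with the ``need to recheck.'' The fix is easy: rewrite the target as $\tfrac{4}{3}x + \tfrac{1}{2} < 2^x$ and set $g(x) = 2^x - \tfrac{4}{3}x - \tfrac{1}{2}$. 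Then $g(1) = \tfrac{1}{6} > 0$ and $g'(x) = 2^x\ln 2 - \tfrac{4}{3} \geq 2\ln 2 - \tfrac{4}{3} > 0$ for $x \geq 1$ (using $\ln 2 > \tfrac{2}{3}$, equivalently $e^2 < 8$), so $g > 0$ on $[1,\infty)$. With that substitution your argument is complete and fully elementary.
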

\begin{proof}
  It suffices to prove that
  $-x\ln(1 - \tfrac{1}{4^x}) < \ln(1 + \tfrac{1}{2^x})$ for all
  $x \geq 1$. Applying the Taylor series expansions for $\ln(1-t)$ and
  $\ln(1+t)$, we obtain the equivalent inequality
\begin{equation}\label{Taylor series inequality}
  x\sum_{n=1}^\infty \dfrac{1}{n}\Big(\dfrac{1}{4^x}\Big)^n < \sum_{n=1}^\infty (-1)^{n+1}\dfrac{1}{n} \Big(\dfrac{1}{2^x}\Big)^n.
\end{equation}
Note that for a fixed $x$, both series converge because
$|\tfrac{1}{4^x}| < 1$ and $|\tfrac{1}{2^x}| < 1$. So, to
establish~\eqref{Taylor series inequality}, it is enough to show that
for each odd integer $n\geq 1$,
\begin{equation*}
  x\left(\dfrac{1}{n}\left(\dfrac{1}{4^x}\right)^n + \dfrac{1}{n+1}\left(\dfrac{1}{4^x}\right)^{n+1}\right) < \dfrac{1}{n}\left(\dfrac{1}{2^x}\right)^n - \dfrac{1}{n+1}\left(\dfrac{1}{2^x}\right)^{n+1}.
\end{equation*}
Adding $\tfrac{1}{n+1}(\tfrac{1}{2^x})^{n+1}$ to both sides of this
inequality and then multiplying by $(4^x)^n$ produces
\begin{equation}\label{n odd inequality}
  x\left(\dfrac{1}{n} + \dfrac{1}{n+1}\left(\dfrac{1}{4^x}\right)\right) + \dfrac{1}{n+1}\cdot(2^x)^{n-1} < \dfrac{1}{n}\cdot(2^x)^n.
\end{equation}
If $n=1$, then~\eqref{n odd inequality} is satisfied since
\begin{equation*}
  x\left(1 + \dfrac{1}{2}\left(\dfrac{1}{4^x}\right)\right) + \dfrac{1}{2} \leq \dfrac{9}{8}x+\dfrac{1}{2} < 2^x.
\end{equation*}
If $n\ge 2$, then~\eqref{n odd inequality} is satisfied since the
left-hand side is less than
$\tfrac{1}{n}\left(x(1+\tfrac{1}{4^x}\right) + (2^x)^{n-1})$, and
\begin{equation*}
  x\left(1+\dfrac{1}{4^x}\right) \leq \dfrac{5}{4}x < 2^x.
\end{equation*}
This gives
\begin{equation*}
  x\left(1+\dfrac{1}{4^x}\right) + (2^x)^{n-1} \leq \dfrac{5}{4}x + (2^x)^{n-1} <  (2^x)^{n-1} + (2^x)^{n-1} = (2^x)^n.
\end{equation*}

\end{proof}

\begin{lemma}\label{Asymptotic lem}
  Let $\rho = 1 - \dfrac{(q+1)(2^{q+1}-q-2)}{2^{q^2+q}}$.
  \begin{enumerate}
  \item\label{Asymptotic lem 1} $\rho$ is the probability that a
    subset selected uniformly at random from an SQD class is core.
  \item\label{Asymptotic lem 2} For all $q \geq 2$, we have
    $\rho > 1 - \dfrac{1}{2^{q^2-q}}$.
  \item\label{Asymptotic lem 3} Let $q \geq 2$ and let
    $k = (q^2-q)/2$. Then, $\dfrac{1}{\rho^k} - 1 < \dfrac{1}{2^k}$.
  \end{enumerate}
\end{lemma}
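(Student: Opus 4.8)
The plan is to handle the three parts in sequence, each reducing to the previous one plus an elementary estimate. For part~\eqref{Asymptotic lem 1}: by Theorem~\ref{theorem:SQD}, an SQD class has exactly $(q+1)(2^{q+1}-q-2)$ noncore subsets, and by Proposition~\ref{Class sizes prop}\eqref{Class sizes prop 4} it has $q^2+q$ elements, hence $2^{q^2+q}$ subsets in total. So the proportion of \emph{noncore} subsets is $(q+1)(2^{q+1}-q-2)/2^{q^2+q}$, and the probability that a uniformly random subset is core is its complement, which is precisely $\rho$. This is immediate.

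For part~\eqref{Asymptotic lem 2}, I would show equivalently that $(q+1)(2^{q+1}-q-2) < 2^{q^2+q}/2^{q^2-q} = 2^{2q}$ for all $q \geq 2$. First bound the left side crudely: $(q+1)(2^{q+1}-q-2) < (q+1)\cdot 2^{q+1}$. So it suffices that $(q+1)2^{q+1} \leq 2^{2q}$, i.e.\ $q+1 \leq 2^{q-1}$, which holds for all $q \geq 3$ (and is easily checked to fail to be needed at $q=2$, where one verifies the original inequality $(3)(8-4)=12 < 16 = 2^4$ directly). I would split into the base case $q=2$ by direct computation and the inductive/monotonicity argument $q+1 \leq 2^{q-1}$ for $q \geq 3$.

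For part~\eqref{Asymptotic lem 3}, set $k=(q^2-q)/2$. By part~\eqref{Asymptotic lem 2} we have $\rho > 1 - 2^{-(q^2-q)} = 1 - 2^{-2k}= 1 - 1/4^k$. Since the function $t \mapsto t^{-k}$ is decreasing on $(0,1]$, this gives $\rho^{-k} < (1 - 1/4^k)^{-k}$. Now apply Lemma~\ref{1/2^x lem} with $x = k$ (note $k = (q^2-q)/2 \geq 1$ for $q \geq 2$), which yields $(1-1/4^k)^{-k} < 1 + 1/2^k$. Chaining these inequalities gives $\rho^{-k} < 1 + 1/2^k$, i.e.\ $\rho^{-k} - 1 < 1/2^k$, as claimed.

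The only mildly delicate point is making sure the hypothesis $x \geq 1$ of Lemma~\ref{1/2^x lem} is satisfied when we plug in $x=k$; since $q \geq 2$ forces $k = (q^2-q)/2 \geq 1$, this is fine. Everything else is bookkeeping with elementary inequalities, so I do not anticipate a genuine obstacle — the substantive analytic content has already been isolated into Lemma~\ref{1/2^x lem}.
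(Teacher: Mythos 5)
Your proposal is correct and follows essentially the same route as the paper: part (1) by combining Proposition~\ref{Class sizes prop}\eqref{Class sizes prop 4} with Theorem~\ref{theorem:SQD}, part (2) by a direct check at $q=2$ and the bound $(q+1)(2^{q+1}-q-2)<(q+1)2^{q+1}\leq 2^{2q}$ for $q\geq 3$, and part (3) by chaining part (2) with Lemma~\ref{1/2^x lem} at $x=k$. Your explicit verification that $k=(q^2-q)/2\geq 1$ for $q\geq 2$ is a small but welcome addition of care over the paper's version.
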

\begin{proof}
  \eqref{Asymptotic lem 1} This is a consequence of
  Proposition~\ref{Class sizes prop}\eqref{Class sizes prop 4} and
  Theorem~\ref{theorem:SQD}.

  \eqref{Asymptotic lem 2} The inequality holds when $q=2$.
  For $q \geq 3$, we have
  \begin{equation*}
    (q+1)(2^{q+1}-q-2) < (q+1)\cdot 2^{q+1} \leq 2^{2q},
  \end{equation*}
  and the result follows.

  \eqref{Asymptotic lem 3} By part~\eqref{Asymptotic lem 2},
  $\rho > 1 - \dfrac{1}{4^k}$ for all $q \geq 2$. Thus, by
  Lemma~\ref{1/2^x lem},
  \begin{equation*}
    \dfrac{1}{\rho^k} - 1 < \Big(1 - \dfrac{1}{4^k}\Big)^{-k} -1 < \dfrac{1}{2^k},
  \end{equation*}
  as desired.
\end{proof}

\begin{theorem}\label{Asymptotic thm} Let $k = (q^2-q)/2$. Let $c$ be
  the number of purely core subsets of $\matFq$, and let $c'$ be the
  number of core subsets of $\matFq$ that are not purely core. Then,
  $c' < \Big(\dfrac{1}{2^k}\Big)c$.
\end{theorem}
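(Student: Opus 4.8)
The plan is to bound $c'$ by showing that any core set $S$ which fails to be purely core can be encoded by choosing, for each of the $k$ SQD classes, either an arbitrary core subset of that class or one of a relatively small number of "dangerous" noncore subsets — while all non-SQD parts of $S$ are forced to be core by Proposition~\ref{Purely core non-SQD prop}. Write $\matFq$ as a disjoint union of its minimal polynomial classes; label the SQD classes $\mcD_1,\dots,\mcD_k$ (recall $k=(q^2-q)/2$ by Lemma~\ref{Counting polynomial types lem}) and let $\mcE$ be the union of all LIN, IRR, and SQR classes. Any $S\subseteq\matFq$ decomposes as $S=(S\cap\mcE)\cup\bigcup_{i=1}^k (S\cap\mcD_i)$, and if $S$ is core then $S\cap\mcE$ is purely core, hence lies among exactly $c_\mcE$ possibilities, where $c_\mcE$ is the number of purely core subsets of $\mcE$. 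Likewise, for each $i$, $S\cap\mcD_i$ is an arbitrary subset of $\mcD_i$, of which there are $2^{q^2+q}$. Letting $n$ denote the number of core subsets of a single SQD class (so $n=2^{q^2+q}-(q+1)(2^{q+1}-q-2)$ and $\rho=n/2^{q^2+q}$), the number of purely core subsets of $\matFq$ is $c=c_\mcE\cdot n^k$, since a set is purely core exactly when $S\cap\mcE$ is purely core and each $S\cap\mcD_i$ is core.

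Next I would bound $c+c'$, the total number of core subsets. A core $S$ still has $S\cap\mcE$ purely core ($c_\mcE$ choices) and each $S\cap\mcD_i$ arbitrary ($2^{q^2+q}$ choices), so crudely $c+c'\le c_\mcE\cdot (2^{q^2+q})^k$. Dividing,
\begin{equation*}
\frac{c+c'}{c}\le \frac{(2^{q^2+q})^k}{n^k}=\frac{1}{\rho^k},
\end{equation*}
hence $c'\le\bigl(\tfrac{1}{\rho^k}-1\bigr)c$. Now apply Lemma~\ref{Asymptotic lem}\eqref{Asymptotic lem 3}, which gives $\tfrac{1}{\rho^k}-1<\tfrac{1}{2^k}$ for $q\ge 2$, yielding $c'<\bigl(\tfrac{1}{2^k}\bigr)c$ as claimed; the case $q=1$ (if $q$ is required to be a prime power at least $2$, this is vacuous, but if $q=1$-type degenerate statements matter they can be checked directly, or simply note the theorem is only asserted for genuine $\F_q$).

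The main obstacle — and the only place requiring genuine care — is justifying that a core set need not have each SQD part core, so that the naive count $c_\mcE\cdot(2^{q^2+q})^k$ is a legitimate \emph{over}count of $c+c'$ rather than something that might miss sets; this is exactly the phenomenon illustrated in Example~\ref{ex:easy SQD}, where adding the scalar matrices $\smat{0}{0}{0}{0},\smat{1}{0}{0}{1}$ makes the whole union core regardless of the SQD part. The point is simply that \emph{every} core $S$ does decompose with $S\cap\mcE$ purely core (Proposition~\ref{Purely core non-SQD prop} applied to the union $\mcE$) and with each $S\cap\mcD_i$ some subset of $\mcD_i$ — no constraint is lost — so $c+c'\le c_\mcE\cdot 2^{k(q^2+q)}$ holds unconditionally. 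One must also confirm that $c=c_\mcE\cdot n^k$ is an \emph{equality}: a set is purely core iff its intersection with every class is core, and since unions of core sets are core, any such choice of a purely core $S\cap\mcE$ together with core subsets $S\cap\mcD_i$ produces a purely core (hence core) set, and conversely; this uses observations (1)–(3) recalled before Definition~\ref{Bubble def}. With those bookkeeping points settled, the rest is the short computation above together with Lemma~\ref{Asymptotic lem}\eqref{Asymptotic lem 3}.
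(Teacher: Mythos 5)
Your proposal is correct and follows essentially the same route as the paper: decompose a core set into its non-SQD part (forced to be purely core, giving $c_\mcE=c_0$ choices) and its SQD parts, note $c=c_0\rho^k 2^{k(q^2+q)}$, bound the remaining choices crudely by $2^{k(q^2+q)}$, and finish with Lemma~\ref{Asymptotic lem}\eqref{Asymptotic lem 3}. The only cosmetic difference is that you bound $c+c'$ while the paper bounds $c'$ directly by $c_0(2^{|\mcD|}-d)$; these are algebraically identical.
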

\begin{proof}
  Let $\mcC_0$ be the union of all the minimal polynomial classes in
  $\matFq$ that are not SQD, and let $\mcC_1$, \ldots, $\mcC_k$ be all
  of the SQD minimal polynomial classes. Assume that
  $S \subseteq \matFq$ is core, but not purely core. Then,
  \begin{equation*}
    S = S_0 \cup S_1 \cup \cdots \cup S_k,
  \end{equation*}
  where $S_i = S \cap \mcC_i$ for each $0 \leq i \leq k$; $S_0$ is
  core; and $S_j$ is noncore for at least one $j$ such that
  $1 \leq j \leq k$. Let $\mcD = \mcC_1 \cup \cdots \cup \mcC_k$ and
  let $T = S_1 \cup \cdots \cup S_k$.

  Let $c_0$ be the number of purely core subsets of $\mcC_0$, and let
  $d$ be the number of purely core subsets of $\mcD$. Then, $c=c_0 d$
  and $d/2^{|\mcD|} = \rho^k$, where $\rho$ is the probability that a
  subset selected uniformly at random from a single SQD class is core. 
  Since $S_0$ is core and $\mcC_0$ consists entirely of non-SQD classes, 
  $S_0$ is purely core by Proposition~\ref{Purely core non-SQD prop}. 
  So, $c_0$ is also equal to the number of possible choices for $S_0$.

  Now, the number of possibilities for $T$ depends on the choice of
  $S_0$. However, by assumption, $T$ is not purely core. Thus, the
  number of possibilities for $T$ is always bounded above by
  $2^{|\mcD|}-d$. So, $c' \leq c_0 (2^{|\mcD|}-d)$. Putting everything
  together, we obtain
\begin{align*}
  \dfrac{c'}{c} \leq \dfrac{c_0(2^{|\mcD|}-d)}{c_0d} = \dfrac{2^{|\mcD|}-d}{d} = \dfrac{1}{\rho^k} - 1.
\end{align*}
The theorem now follows from Lemma~\ref{Asymptotic
  lem}\eqref{Asymptotic lem 3}.
\end{proof}

\begin{theorem}\label{Asymptotic main thm}
  
\end{theorem}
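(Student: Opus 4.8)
The plan is to prove the three assertions in order, with the first—that almost all subsets of $\matFq$ are purely core—carrying all of the analytic content, after which the other two follow formally. Throughout I will use that $|\matFq| = q^4$, so there are $2^{q^4}$ subsets of $\matFq$ in total, and that the minimal polynomial classes partition $\matFq$.

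For part~\eqref{Asymptotic main thm 1}, the key observation is that a subset is purely core exactly when its intersection with each minimal polynomial class is core, so the number $c$ of purely core subsets is the product over all classes of the number of core subsets of that class. Dividing through by $2^{q^4} = \prod_{\mcC} 2^{|\mcC|}$ turns this into a product of per-class fractions. Invoking the class counts of Lemma~\ref{Counting polynomial types lem}, the class sizes of Proposition~\ref{Class sizes prop}, and the noncore counts of Theorems~\ref{theorem:IRR},~\ref{SQR thm}, and~\ref{theorem:SQD} (LIN classes contributing a trivial factor $1$), I would arrive at
\begin{equation*}
  \frac{c}{2^{q^4}} = \Big(1 - \frac{q^2-q}{2^{q^2-q}}\Big)^{\binom{q}{2}} \Big(1 - \frac{(q+1)(2^{q-1}-1)}{2^{q^2-1}}\Big)^{q} \, \rho^{\binom{q}{2}},
\end{equation*}
where $\rho$ is the SQD probability of Lemma~\ref{Asymptotic lem} and $k = \binom{q}{2} = (q^2-q)/2$.

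Each of the three factors lies in $[0,1]$, so it suffices to bound each below by a quantity tending to $1$. For the SQD factor this is precisely Lemma~\ref{Asymptotic lem}\eqref{Asymptotic lem 3}, which gives $\rho^{k} > (1 + 2^{-k})^{-1} > 1 - 2^{-k}$. For the IRR and SQR factors I would apply Bernoulli's inequality $(1-t)^{n} \geq 1 - nt$ (valid for $t \in [0,1]$, $n \geq 1$); since for $q \geq 2$ both relevant values of $t$ lie in $[0,1]$, this yields
\begin{equation*}
  \Big(1 - \frac{q^2-q}{2^{q^2-q}}\Big)^{\binom{q}{2}} \geq 1 - \frac{(q^2-q)^2}{2^{q^2-q+1}}, \qquad \Big(1 - \frac{(q+1)(2^{q-1}-1)}{2^{q^2-1}}\Big)^{q} \geq 1 - \frac{q(q+1)}{2^{q^2-q}},
\end{equation*}
both of which tend to $1$. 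Multiplying the three lower bounds and letting $q \to \infty$ gives $c/2^{q^4} \to 1$, which is part~\eqref{Asymptotic main thm 1}. The point that needs care here—and which I regard as the only real subtlety—is that the number of factors is $\Theta(q^2)$ and grows with $q$, so one genuinely needs the super-exponential (in $q$) decay of each factor's defect rather than mere pointwise convergence to $1$; this is exactly what Lemmas~\ref{1/2^x lem} and~\ref{Asymptotic lem} have been arranged to supply for the SQD factor, and Bernoulli handles the other two factors cheaply.

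Part~\eqref{Asymptotic main thm 2} is then immediate from Theorem~\ref{Asymptotic thm}: writing $c'$ for the number of core subsets that are not purely core, the number of core subsets is $c + c'$, and $c'/c < 2^{-k}$ gives
\begin{equation*}
  \frac{c}{c + c'} = \frac{1}{1 + c'/c} > \frac{1}{1 + 2^{-k}} \longrightarrow 1.
\end{equation*}
For part~\eqref{Asymptotic main thm 3}, since every purely core subset is core and every core subset is a subset of $\matFq$, we have
\begin{equation*}
  \frac{c}{2^{q^4}} \leq \frac{c + c'}{2^{q^4}} \leq 1,
\end{equation*}
and the left-hand side tends to $1$ by part~\eqref{Asymptotic main thm 1}; the squeeze theorem finishes the proof.
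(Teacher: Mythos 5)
Your proof is correct and follows essentially the same route as the paper: part~\eqref{Asymptotic main thm 1} by writing the purely core count as a product over class types and showing its ratio to $2^{q^4}$ tends to $1$, part~\eqref{Asymptotic main thm 2} via Theorem~\ref{Asymptotic thm} and the identity $c/(c+c') = 1/(1+c'/c)$, and part~\eqref{Asymptotic main thm 3} as an immediate consequence of~\eqref{Asymptotic main thm 1}. If anything, your explicit Bernoulli-inequality lower bounds for the IRR and SQR factors (and the appeal to Lemma~\ref{Asymptotic lem}\eqref{Asymptotic lem 3} for the SQD factor) make rigorous an asymptotic step that the paper's proof only asserts.
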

\begin{proof}
  \eqref{Asymptotic main thm 1} Let $\mcC_1$, $\mcC_2$, $\mcC_2$, and
  $\mcC_4$ be, respectively, the union of all LIN, IRR, SQR, and SQD
  classes in $\matFq$. For each $1 \leq i \leq 4$, let $c_i$ be the
  number of purely core subsets in $\mcC_i$, and let $c$ be the total
  number of purely core subsets of $\matFq$. Then,
  $c=c_1 c_2 c_3 c_4$. From the counts in Table~\ref{Noncore table},
\begin{align*}
  c_1 &= 2^q,\\
  c_2 &= (2^{q^2-q}-(q^2-q))^{(q^2-q)/2},\\
  c_3 &= (2^{q^2-1}-(q+1)(2^{q+1}-1))^q, \text{ and}\\
  c_4 &= (2^{q^2+q}-(q+1)(2^{q+1}-q-2))^{(q^2-q)/2}.
\end{align*}
Asymptotically,
\begin{equation*}
  c_2 \sim 2^{(q^2-q)(q^2-q)/2}, \quad c_3 \sim 2^{q^3-q}, \quad\text{and} \quad c_4 \sim 2^{(q^2+q)(q^2-q)/2}.
\end{equation*}
Since
$q + \tfrac{1}{2}(q^2-q)^2 + (q^3-q) + \tfrac{1}{2}(q^2+q)(q^2-q) =
q^4$, we see that $c/(2^{q^4}) \to 1$ as $q \to \infty$. This proves
the assertion as $|\matFq| = q^4$.

\eqref{Asymptotic main thm 2} Define $c$ and $c'$ as in
Theorem~\ref{Asymptotic thm}. Then, $c/(c+c') = 1/(1+c'/c)$ and this
tends to 1 as $q \to \infty$ by Theorem~\ref{Asymptotic thm}.

\eqref{Asymptotic main thm 3} This follows from~\eqref{Asymptotic main
  thm 1}.
\end{proof}


\section{Examples of Core Sets that are Not Purely Core}
\label{section:examples}
The matrix ring $\matFq$ always contains subsets that are core, but
not purely core. Theorem~\ref{Asymptotic thm} shows that the
proportion of core sets that are not purely core is small for large $q$.
Nevertheless, it would be useful to enumerate such sets. In
this section, we present some examples to illustrate that---even in a
case where $q$ is small---this can be a difficult
problem.\footnote{Additionally, program code computing the numbers in
  these examples based on the algorithm provided
  in~\cite{Werner:2022:null-ideals} to determine core sets is
  available on the ArXiv page of this paper\arxivid.}

By Proposition~\ref{Purely core non-SQD prop}, any core subset of
$\matFq$ that is not purely core must have a noncore intersection with
at least one SQD class. When $q=2$, there is only one SQD class, so it
is feasible to completely count all of these core sets. As
Example~\ref{M_2(2) example} below shows, even in this simple setting
the calculation involves a rather careful analysis of the situation.

\begin{lemma}\label{L-module lem 3}
  Let $S \subseteq \mcC((x-a)(x-b))$ be a noncore subset of an SQD
  class in~$\matFq$.
\begin{enumerate}
\item\label{L-mod 3, 1}\cite[Lemma 6.3(3)]{Werner:2022:null-ideals} If
  $|S|=1$, then both $\mcL(S,a)$ and $\mcL(S,b)$ are nonzero.
\item\label{L-mod 3, 2}\cite[proof of Lemma 6.3(4)]{Werner:2022:null-ideals} If
  $|S|\geq 2$, then exactly one of $\mcL(S,a)$ or $\mcL(S,b)$ is zero.
\item\label{L-mod 3, 3} If $\mcL(S,a) =\{0\}$, then $x-b$ divides each
  polynomial in $N(S)$. Likewise, if $\mcL(S,b) = \{0\}$, then $x-a$
  divides each polynomial in $N(S)$.
\item\label{L-mod 3, 4} $\mcC((x-a)(x-b))$ contains $(q+1)(2^q-q-1)$
  subsets $T$ such that $\mcL(T,a) = \{0\}$ and $\mcL(T,b) \ne
  \{0\}$. Similarly, there are $(q+1)(2^q-q-1)$ subsets $T$ with
  $\mcL(T,a) \ne \{0\}$ and $\mcL(T,b) = \{0\}$.
\end{enumerate}
\end{lemma}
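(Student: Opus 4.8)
The plan is to handle the four parts in order, with parts~\eqref{L-mod 3, 1} and~\eqref{L-mod 3, 2} being quoted from~\cite{Werner:2022:null-ideals}, so that the real work lies in parts~\eqref{L-mod 3, 3} and~\eqref{L-mod 3, 4}. For part~\eqref{L-mod 3, 3}, suppose $\mcL(S,a) = \{0\}$ and let $f \in N(S)$. Since $\phi_S(x) = (x-a)(x-b)$ lies in $N(S)$ and $S$ is noncore, by Lemma~\ref{lem:deg 1} there is a linear polynomial in $N(S)$; more precisely I would argue that every $f \in N(S)$, upon right division by $(x-a)(x-b)$, leaves a linear remainder $\alpha x + \beta \in N(S)$. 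Then $\alpha x + \beta = \alpha(x-b) + (\alpha b + \beta)$; evaluating at any $A \in S$ and using that $A$ satisfies $(x-a)(x-b)$ forces the constant $\alpha b + \beta$ to be expressible via $\alpha(x-b)$ evaluated suitably — the cleanest route is: the set of linear polynomials in $N(S)$ is $\{\alpha(x-b) : \alpha \in \mcL(S,b)\} \cup \{\alpha(x-a): \alpha \in \mcL(S,a)\}$ up to the obvious span, and since $\mcL(S,a) = \{0\}$ every linear polynomial in $N(S)$ is a right multiple of $(x-b)$. Combined with the remainder argument, $(x-b)$ divides every $f \in N(S)$. The symmetric statement is identical with $a$ and $b$ swapped.

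For part~\eqref{L-mod 3, 4}, I would count the subsets $T \subseteq \mcC((x-a)(x-b))$ with $\mcL(T,a) = \{0\}$ and $\mcL(T,b) \ne \{0\}$. By Proposition~\ref{Bset prop}\eqref{Bset prop 4} applied to a nonempty such $T$ (which is automatically noncore, since having a nonzero $\mcL$-module gives a degree-one polynomial in $N(T)$ via Lemma~\ref{lem:deg 1}), $T$ must sit inside a single $\mcB$-set. The condition $\mcL(T,b) \ne \{0\}$ together with $\mcL(T,a) = \{0\}$ pins down which kind: using Lemma~\ref{L-module lem 2}\eqref{L-mod 2, 2}, the sets $T$ with $\mcL(T,b) \ne \{0\}$ and $|T| \ge 1$ are exactly the nonempty subsets of the $\mcB$-sets $\mcB(-,b)$; among these, the ones that additionally satisfy $\mcL(T,a) = \{0\}$ are those that are \emph{not} also forced to have $\mcL(T,a) \ne \{0\}$, and by Lemma~\ref{L-module lem 3}\eqref{L-mod 3, 2} a subset of size $\ge 2$ contained in $\mcB(A,b)$ automatically has $\mcL(T,a) = \{0\}$, while a singleton $\{A\}$ has both $\mcL$-modules nonzero by part~\eqref{L-mod 3, 1}. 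Hence the desired $T$ are precisely the subsets of size $\ge 2$ of the $\mcB$-sets of the form $\mcB(-,b)$. By Lemma~\ref{Bset lem}\eqref{Bset lem 5} there are $q+1$ such $\mcB$-sets partitioning $\mcC$, each of size $q$ by Lemma~\ref{Bset lem}\eqref{Bset lem 2}, and distinct ones intersect only in nothing (they partition), so subsets of size $\ge 2$ lie in a unique one; this gives $(q+1)(2^q - q - 1)$ choices. The symmetric count with $a$ and $b$ swapped is identical.

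The main obstacle I anticipate is part~\eqref{L-mod 3, 4}: one must be careful that a subset $T$ of size $\ge 2$ sitting in some $\mcB(A,b)$ is counted exactly once (which follows because the $\mcB(-,b)$ partition $\mcC$, so a set of two or more elements cannot lie in two of them), and that no size-$\ge 2$ subset with $\mcL(T,a)=\{0\}$, $\mcL(T,b)\ne\{0\}$ has been missed — this is exactly the content of Proposition~\ref{Bset prop}\eqref{Bset prop 4} combined with part~\eqref{L-mod 3, 2}, so the argument reduces to a clean bookkeeping once those structural facts are invoked. A secondary subtlety in part~\eqref{L-mod 3, 3} is justifying the reduction to linear remainders; this is immediate from the discussion preceding Lemma~\ref{lem:deg 1} (that $S$ is core iff every element of $N(S)$ is divisible by $\phi_S$, equivalently every minimal polynomial divides every element of $N(S)$), so that any $f \in N(S)$ has the same divisibility behaviour by $(x-b)$ as its linear remainder.
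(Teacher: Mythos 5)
Your proposal is correct and follows essentially the same route as the paper: parts \eqref{L-mod 3, 1} and \eqref{L-mod 3, 2} are quoted, part \eqref{L-mod 3, 4} identifies the relevant sets $T$ as exactly the subsets of size at least $2$ of the $q+1$ sets $\mcB(-,b)$ (via Proposition~\ref{Bset prop}\eqref{Bset prop 4} and Lemma~\ref{Bset lem}), and part \eqref{L-mod 3, 3} rests on writing any $f \in N(S)$ as $g(x)(x-a)(x-b) + \alpha_1(x-a) + \alpha_2(x-b)$ with $\alpha_1 \in \mcL(S,a)$ and $\alpha_2 \in \mcL(S,b)$. The one claim you only gesture at --- that every linear polynomial in $N(S)$ splits this way --- is precisely what the paper imports from \cite[Proposition~6.4(2)]{Werner:2022:null-ideals}, and it does hold (solve for $\alpha_1, \alpha_2$ using $a \ne b$, then multiply the evaluated identity on the right by $A-b$, respectively $A-a$), so nothing essential is missing.
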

\begin{proof}
  \eqref{L-mod 3, 3} By~\cite[Proposition
  6.4(2)]{Werner:2022:null-ideals}, any polynomial $f \in N(S)$ can be
  written as
\begin{equation*}
  f(x) = g(x)(x-a)(x-b) + \alpha_1(x-a) + \alpha_2(x-b),
\end{equation*}
where $g(x) \in \matFq[x]$, $\alpha_1 \in \mcL(S,a)$, and
$\alpha_2 \in \mcL(S,b)$. Thus, if $\mcL(S,a) = \{0\}$, then
$\alpha_1=0$ and $x-b$ divides $f$. Likewise, $x-a$ divides $f$ when
$\mcL(S,b) = \{0\}$.

\eqref{L-mod 3, 4} Let $T \subseteq \mcC((x-a)(x-b))$ be such that
$\mcL(T,a) = \{0\}$ but $\mcL(T,b) \ne \{0\}$. Then, $T$ is noncore by
Proposition~\ref{Single class prop}\eqref{Single class prop 3}, and by
part~\eqref{L-mod 3, 1}, $|T| \geq 2$. By Proposition~\ref{Bset
  prop}\eqref{Bset prop 4}, $T \subseteq \mcB(A,b)$ for some
$A \in T$. But, by Lemma~\ref{Bset lem} parts~\eqref{Bset lem 2}
and~\eqref{Bset lem 5}, $|\mcB(A,b)|=q$ and there are $q+1$
possibilities for $\mcB(A,b)$. Since $T$ cannot be the empty set or a
singleton set, there are $(q+1)(2^q - q -1)$ possibilities for~$T$.
\end{proof}

Throughout the remainder of this section, let $\phi_S$ denote the
least common multiple of all polynomials which occur as minimal
polynomial of one of the matrices in $S$.  Recall the following facts
mentioned at the start of Section \ref{section:prelim_and_sizes}.
\begin{fact}\label{fact:Phi}
  Let $S\subseteq \matFq$. The following are equivalent:

  \begin{enumerate}
  \item $S$ is core.
  \item $N(S)$ is generated (as a two-sided ideal) by $\phi_S$.
  \item $N(S)$ does not contain a polynomials of degree less than the
    degree of $\phi_S$.
  \item If $f$ is the minimal polynomial of a matrix in $S$, then $f$
    divides each polynomial in $N(S)$.
  \end{enumerate}
\end{fact}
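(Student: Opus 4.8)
The plan is to assemble Fact~\ref{fact:Phi} from the results recalled at the start of Section~\ref{section:prelim_and_sizes}, organizing them into a short cycle of implications. Two inputs from \cite{Werner:2022:null-ideals} carry the weight: $\phi_S \in N(S)$ for every $S$ (\cite[Theorem~4.4(1)]{Werner:2022:null-ideals}), and \cite[Corollary~4.6]{Werner:2022:null-ideals}, which is precisely the equivalence of~(1) and~(3); so that pair comes for free, and it remains only to connect~(2) and~(4) to it.

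For (1)~$\Rightarrow$~(2), the key observation is that $\phi_S \in \F_q[x]$ has scalar coefficients, hence is a \emph{central} element of $\matFq[x]$; therefore the two-sided ideal it generates is just the left ideal $\phi_S\matFq[x]$. If $S$ is core, then $N(S)$ is a two-sided ideal containing $\phi_S$, so $\phi_S\matFq[x]\subseteq N(S)$; and the reformulation of Corollary~4.6 recalled in the text --- that $S$ core implies every polynomial in $N(S)$ is divisible by $\phi_S$ --- gives the reverse inclusion, so $N(S) = \phi_S\matFq[x]$. The implication (2)~$\Rightarrow$~(3) is then immediate: since $\phi_S$ is monic with scalar coefficients, multiplication by $\phi_S$ raises degrees by exactly $\deg\phi_S$, so every nonzero element of $\phi_S\matFq[x]$ has degree at least $\deg\phi_S$. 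Together with Corollary~4.6 this closes the loop on~(1),~(2), and~(3).

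It remains to fold in~(4). For (1)~$\Rightarrow$~(4): if $S$ is core, then as above every $f\in N(S)$ is divisible by $\phi_S$, and $\phi_S$ is by construction divisible by the minimal polynomial of each matrix in $S$, so each such minimal polynomial divides $f$. For (4)~$\Rightarrow$~(2), I would invoke the fact that in $\F_q[x]$ a least common multiple of finitely many polynomials $f_1,\dots,f_r$ divides any $g\in\matFq[x]$ that each $f_i$ divides: for two polynomials, writing $d=\gcd(f_i,f_j)=uf_i+vf_j$ and $g=f_ih_i=f_jh_j$ gives $gd=f_if_j(uh_j+vh_i)$, so $\operatorname{lcm}(f_i,f_j)=f_if_j/d$ divides $g$, and the general case follows by induction on $r$. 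Applying this to the minimal polynomials occurring among matrices of $S$ shows that $\phi_S$ divides every polynomial in $N(S)$, so $N(S)\subseteq\phi_S\matFq[x]\subseteq N(S)$, which establishes~(2). (The degenerate case $S=\varnothing$, where $\phi_S=1$ and $N(S)=\matFq[x]$, satisfies all four conditions trivially.)

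I do not expect a genuine obstacle: the statement merely consolidates facts already established in \cite{Werner:2022:null-ideals} and recalled above. The only points needing a moment's care are the centrality of $\phi_S$ (so that ``two-sided ideal generated by $\phi_S$'' literally equals $\phi_S\matFq[x]$) and the short Bézout computation underlying the implication from~(4).
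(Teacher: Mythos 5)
Your argument is correct and matches the paper's treatment: the paper states this as a recalled \emph{Fact} without proof, resting on exactly the two citations you use ($\phi_S \in N(S)$ from \cite[Theorem~4.4(1)]{Werner:2022:null-ideals} and the degree criterion of \cite[Corollary~4.6]{Werner:2022:null-ideals}), and your cycle of implications just writes out the routine connective tissue --- centrality of $\phi_S$, division by a monic central polynomial, and the lcm/B\'ezout step --- that the paper leaves implicit.
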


\begin{example}\label{M_2(2) example}
  We compute the number of core subsets of $\mat{2}$ that are not
  purely core. Since we already know how to enumerate purely core
  subsets, this allows us to give an exact count of the number of core
  subsets of $\mat{2}$. Table~\ref{M_2(2) class table} lists all of
  the minimal polynomial classes in $\mat{2}$, along with the number
  of core and noncore subsets in each class.

\begin{table}[h]
\centering
\begin{tabular}{|c*{3}{|C{2.5cm}}|}
  \hline
  polynomial &  size of class  & \# core subsets in class & \# noncore subsets in class \\
  \hline
  \hline
  $x$ & 1 & 2 & 0\\
  \hline
  $x+1$ & 1 & 2 & 0\\
  \hline
  $x^2+x+1$ & 2 & 2 & 2\\
  \hline
  $x^2$ & 3 & 5 & 3\\
  \hline
  $(x+1)^2$ & 3 & 5 & 3\\
  \hline
  $x(x+1)$ & 6 & 52 & 12\\
  \hline
\end{tabular}
\caption{Minimal polynomial classes in $\mat{2}$}\label{M_2(2) class
  table}
\end{table}

Assume that $S \subseteq \mat{2}$ is core, but is not purely core. Let
$\mcC_0$ be the union of all the non-SQD minimal polynomial classes in
$\mat{2}$. By Proposition~\ref{Purely core non-SQD prop}, we can write
$S = S_0 \cup T$, where $S_0 \subseteq \mcC_0$ is nonempty and core,
and $T \subseteq \mcC(x(x+1))$ is noncore. According to
Fact~\ref{fact:Phi}, $N(S_0)$ is a principal two-sided ideal of
$\mat{2}[x]$. Let $\phi_0\in \F_q[x]$ be the monic generator of
$N(S_0)$. We first argue that either $x \mid \phi_0$ or
$(x+1) \mid \phi_0$. Suppose that neither $x$ nor $x+1$ divides
$\phi_0$. Then, $\phi_0(x)=x^2+x+1$ and
$\phi_S(x) = x (x+1)(x^2+x+1)$. Since $T$ is noncore, there exists a
linear polynomial $f \in N(T)$, and $f\cdot\phi_0$ is an element of
$N(S)$ of degree less than $\deg \phi_S$. This contradicts the fact
that $S$ is core, see Fact~\ref{fact:Phi}.

Thus, at least one of $x$ or $x+1$ divides $\phi_0$. From here, we
will break the problem into three cases, depending on which factors of
$x(x+1)$ divide $\phi_0$.

\vspace{\baselineskip}

\noindent\textbf{Case 1}: $x \mid \phi_0$, but $(x+1) \nmid \phi_0$\\
In this case,
$S_0 \cap \big(\mcC(x+1) \cup \mcC((x+1)^2)\big) =
\varnothing$. Indeed, if this intersection were core but nonempty,
then $x+1$ would divide $\phi_0$ by~Fact~\ref{fact:Phi}; and if the
intersection were noncore, then $S_0$ would be noncore by
Proposition~\ref{Purely core non-SQD prop}. Similar reasoning shows
that $S_0 \cap \big(\mcC(x) \cup \mcC(x^2)\big)$ is core and nonempty,
and $S_0 \cap \mcC(x^2+x+1)$ is core (but may be empty). Thus, the
number of possibilities for $S_0$ is $(2 \cdot 5 -1)\cdot 2=18$.

Note that $\phi_S = \phi_0\cdot(x+1)$. Since $T$ is noncore at least
one of $\mcL(T,0)$ or $\mcL(T,1)$ is nonzero according to
Proposition~\ref{Single class prop}\eqref{Single class prop
  3}. Suppose that $\mcL(T,0) \ne \{0\}$. Then, since $x$ divides
$\phi_0$, we see that $\alpha\phi_0 \in N(S)$ for any nonzero
$\alpha \in \mcL(T,0)$, which is a contradiction to $S$ being core,
cf.~Fact~\ref{fact:Phi}. So, $T$ is such that $\mcL(T,0) = \{0\}$ and
$\mcL(T,1) \ne \{0\}$. By Lemma~\ref{L-module lem 3}\eqref{L-mod 3,
  4}, there are 3 possibilities for $T$. Hence, there are
$54=18\cdot 3$ possibilities for $S$ in Case~1.

\vspace{\baselineskip}

\noindent\textbf{Case 2}: $x \nmid \phi_0$, but $(x+1) \mid \phi_0$\\
Proceeding as in Case~1, we see that there are 54 choices for $S$ in
this case.

\vspace{\baselineskip}

\noindent\textbf{Case 3}: $x \mid \phi_0$ and $(x+1) \mid \phi_0$\\
This time, $S_0 \cap \big(\mcC(x) \cup \mcC(x^2)\big)$ must be core
and nonempty; $S_0 \cap \big(\mcC(x+1) \cup \mcC((x+1)^2)\big)$ must
be core and nonempty; and $S_0 \cap \mcC(x^2+x+1)$ is core (but may be
empty). So, there are $9\cdot 9\cdot 2=162$ possibilities for
$S_0$. Since $x(x+1)$ divides $\phi_0$, each element of $T$ is killed
by $\phi_0$. Thus, $T$ can be any noncore subset of $\mcC(x(x+1))$, of
which there are 12. Hence, in this case there are $1944=162\cdot 12$
choices for $S$.

\vspace{\baselineskip}

Combining these three cases, we see that there are $54+54+1944=2052$
core subsets of $\mat{2}$ that are not purely core. Since there are
$2^3\cdot 5^2\cdot 52=10400$ purely core subsets, we conclude that
$\mat{2}$ contains exactly 12452 core subsets.
\end{example}

In the next two examples, we consider what happens when more than one
SQD class is involved in the construction of core sets that are not
purely core. For situations in which multiple SQD classes are present,
we have found it useful to organize the available information by using
graphs. Given $S \subseteq \matFq$, let $T$ be the collection of all
matrices in $S$ whose minimal polynomial is SQD, and let
$S_0 = S \setminus T$. We know that $S$ is noncore if $S_0$ is noncore
(Proposition~\ref{Purely core non-SQD prop}). If this is not the case,
then form a graph where each vertex corresponds to a root of
$\phi_{T}$, and an edge joins two vertices $a$ and $b$ if and only if
$S \cap \mcC((x-a)(x-b))$ is nonempty. For instance, if $\phi_{T}$ has
roots $a_1$, $a_2$, and $a_3$, then the graph might be
\begin{center}
  \begin{tikzpicture}[scale=1,auto=left,every node/.style={circle,
      draw=black, fill=none, inner sep=0pt, minimum size=6pt}]

    \node[label={[label distance=1pt]270:{$a_1$}}] (a1) at (-1.5,0) {};
    \node[label={[label distance=1pt]270:{$a_2$}}] (b1) at (0,0) {};
    \node[label={[label distance=1pt]270:{$a_3$}}] (c1) at (1.5,0) {};

    \draw (a1) -- node [above,draw=none,fill=none] {$T_1$} (b1);
    \draw (b1) -- node [above,draw=none,fill=none] {$T_2$} (c1);

    \node[draw=none] (OR) at (3,0) {or};

    \node[label={[label distance=1pt]180:{$a_1$}}] (a2) at (4.5,0) {};
    \node[label={[label distance=1pt]0:{$a_2$}}] (b2) at (6,0.5) {};
    \node[label={[label distance=1pt]0:{$a_3$}}] (c2) at (6,-0.5) {};

    \draw (a2) -- node [midway, above, draw=none,fill=none] {$T_1$} (b2);
    \draw (a2) -- node [midway, below, draw=none,fill=none] {$T_2$} (c2);

    \node[draw=none] (OR) at (7.5,0) {or};

    \node[label={[label distance=1pt]270:{$a_1$}}] (a3) at (9,-0.5) {};
    \node[label={[label distance=1pt]90:{$a_2$}}] (b3) at (9.75,0.5) {};
    \node[label={[label distance=1pt]270:{$a_3$}}] (c3) at (10.5,-0.5) {};

    \draw (a3) -- node [above left,draw=none,fill=none] {$T_1$} (b3);
    \draw (b3) -- node [above right,draw=none,fill=none] {$T_2$} (c3);
    \draw (a3) -- node [below,draw=none,fill=none] {$T_3$} (c3);

  \end{tikzpicture}
\end{center}
among other possibilities. Each edge $T_k$ with endpoints $a_i$ and
$a_j$ represents the nonempty set $S \cap \mcC((x-a_i)(x-a_j))$. In
one of these graphs, we will shade the vertex corresponding to the
root $a$ if we know that $x-a$ divides every polynomial in $N(S)$. If
each vertex in the graph is shaded, then $S$ is core by
Fact~\ref{fact:Phi}.

The presence of $\mcL$-modules that equal $\{0\}$ can indicate when to
shade a vertex. Suppose $T_k \subseteq \mcC((x-a_i)(x-a_j))$
corresponds to the edge joining vertices $a_i$ and $a_j$. By
Lemma~\ref{L-module lem 3}\eqref{L-mod 3, 3}, if
$\mcL(T_k,a_i) = \{0\}$, then we shade vertex $a_j$, and if
$\mcL(T_k,a_j) = \{0\}$, then we shade vertex $a_i$. By
Proposition~\ref{Single class prop}\eqref{Single class prop 3}, if
$T_k$ is core, then we shade both vertices.

\begin{example}\label{M_2(3) SQD example}
  Let $\mcC = \mcC(x(x-1)) \cup \mcC((x-1)(x-2))$ in $\mat{3}$. We
  count the number of cores subsets of $\mcC$. Using
  Table~\ref{Noncore table}, we calculate that each of $\mcC(x(x-1))$
  and $\mcC((x-1)(x-2))$ contain $2^{12}- 44= 4052$ core
  subsets. Hence, $\mcC$ contains $4052^2$ purely core subsets.

  Now, let $S \subseteq \mcC$ be core, but not purely core. Then,
  $S = T_1 \cup T_2$, where $T_1 \subseteq \mcC(x(x-1))$,
  $T_2 \subseteq \mcC((x-1)(x-2))$, and at least one of $T_1$ or $T_2$
  is noncore. We will consider three cases, depending on which of
  $T_1$ or $T_2$ (or both) is noncore. Throughout, note that
  $\phi_S(x) = x(x-1)(x-2)$, and $N(S)$ contains no polynomial of
  smaller degree, cf.~Fact~\ref{fact:Phi}.

\vspace{\baselineskip}

\noindent\textbf{Case 1}: $T_1$ is core, but $T_2$ is noncore\\
Since $T_1\cup T_2$ is core, $T_1$ has to be nonempty. The
corresponding graph is
\begin{center}
  \begin{tikzpicture}[scale=1,auto=left,every node/.style={circle,
      draw=black, fill=none, inner sep=0pt, minimum size=6pt}]

    \node[fill = black!30, label={[label distance=1pt]270:{0}}] (0) at (-2,0) {};
    \node[fill = black!30, label={[label distance=1pt]270:{1}}] (1) at (0,0) {};
    \node[label={[label distance=1pt]270:{2}}] (2) at (2,0) {};

    \draw (0) -- node [above,draw=none,fill=none] {$T_1$} (1);
    \draw (1) -- node [above,draw=none,fill=none] {$T_2$} (2);
\end{tikzpicture}
\end{center}
where both vertex 0 and 1 are shaded because $T_1$ is assumed to be
core. We can shade vertex $2$ if $\mcL(T_2,1) = \{0\}$. However, if
$\mcL(T_2,1) \ne \{0\}$, then $\alpha x(x-1) \in N(S)$ for each
nonzero $\alpha \in \mcL(T_2,1)$. Thus, $S$ is core if and only if
$\mcL(T_2,1) = \{0\}$. By Lemma~\ref{L-module lem 3}\eqref{L-mod 3,
  4}, there are $16$ possibilities for $T_2$. Thus, there are
$4051\cdot 16$ possibilities for $S$ in this case.

\vspace{\baselineskip}

\noindent\textbf{Case 2}: $T_1$ is noncore, but $T_2$ is core\\
This time, the corresponding graph is
\begin{center}
  \begin{tikzpicture}[scale=1,auto=left,every node/.style={circle,
      draw=black, fill=none, inner sep=0pt, minimum size=6pt}]

    \node[label={[label distance=1pt]270:{0}}] (0) at (-2,0) {};
    \node[fill = black!30, label={[label distance=1pt]270:{1}}] (1) at (0,0) {};
    \node[fill = black!30, label={[label distance=1pt]270:{2}}] (2) at (2,0) {};

    \draw (0) -- node [above,draw=none,fill=none] {$T_1$} (1);
    \draw (1) -- node [above,draw=none,fill=none] {$T_2$} (2);
    \end{tikzpicture}
\end{center}
Proceeding as in Case 1, we find that there are $4051\cdot 16$
possibilities for $S$.

\vspace{\baselineskip}

\noindent\textbf{Case 3}: both $T_1$ and $T_2$ are noncore\\
Here, the graph is
\begin{center}
  \begin{tikzpicture}[scale=1,auto=left,every node/.style={circle,
      draw=black, fill=none, inner sep=0pt, minimum size=6pt}]

    \node[label={[label distance=1pt]270:{0}}] (0) at (-2,0) {};
    \node[label={[label distance=1pt]270:{1}}] (1) at (0,0) {};
    \node[label={[label distance=1pt]270:{2}}] (2) at (2,0) {};

    \draw (0) -- node [above,draw=none,fill=none] {$T_1$} (1);
    \draw (1) -- node [above,draw=none,fill=none] {$T_2$} (2);
\end{tikzpicture}
\end{center}
With the argument used in Case~1, in order to shade both vertices 0
and 2, we need $\mcL(T_1,1) = \{0\}$ and $\mcL(T_2,1) = \{0\}$, which
produces the graph
\begin{center}
  \begin{tikzpicture}[scale=1,auto=left,every node/.style={circle,
      draw=black, fill=none, inner sep=0pt, minimum size=6pt}]

    \node[fill = black!30, label={[label distance=1pt]270:{0}}] (0) at (-2,0) {};
    \node[label={[label distance=1pt]270:{1}}] (1) at (0,0) {};
    \node[fill = black!30, label={[label distance=1pt]270:{2}}] (2) at (2,0) {};

    \draw (0) -- node [above,draw=none,fill=none] {$T_1$} (1);
    \draw (1) -- node [above,draw=none,fill=none] {$T_2$} (2);
\end{tikzpicture}
\end{center}
Now, since both $T_1$ and $T_2$ are noncore, both $\mcL(T_1,0)$ and
$\mcL(T_2,2)$ are nonzero (Proposition~\ref{Single class
  prop}\eqref{Single class prop 3}). In this situation,~\cite[Theorem
6.6]{Werner:2022:null-ideals} shows that $S$ is core if and only if
$\mcL(T_1,0) \ne \mcL(T_2,2)$. By Lemma~\ref{L-module lem
  2}\eqref{L-mod 2, 1}, there are 4 choices for each $\mcL$-module,
and hence $4^2-4=12$ ways to choose $\mcL(T_1,0)$ and $\mcL(T_2,2)$ so
that they are not equal. Once $\mcL(T_1,0)$ has been specified, there
are $2^3-(3+1) = 4$ choices for $T_1$, because $T_1$ must be a subset
of some $\mcB(-,0)$ of size at least 2 (Lemma~\ref{L-module lem
  3}\eqref{L-mod 3, 1}). Likewise, there are 4 choices for $T_2$. In
summary, there are $12\cdot 4^2$ possibilities for $S$ in Case 3.

\vspace{\baselineskip}

We conclude that $\mcC$ contains $4052^2$ purely core and
$2\cdot 4051\cdot 16 + 12\cdot 4^2$ core subsets that are not purely
core.
\end{example}

In our last example, we examine the effect of mixing an SQR class with
two SQD classes.

\begin{example}\label{M_2(3) example}
  Let $\mcC = \mcC(x^2) \cup \mcC(x(x-1)) \cup \mcC((x-1)(x-2))$ in
  $\mat{3}$. We count the number of core subsets of $\mcC$. Via
  Table~\ref{Noncore table}, $\mcC(x^2)$ contains $2^8-12=244$ core
  subsets, and each SQD class contains $4052$ core subsets
  (cf.~Example~\ref{M_2(3) SQD example}). So, $\mcC$ contains
  $244\cdot 4052^2$ purely core subsets.

  From here, let $S \subseteq \mcC$ be core, but not purely
  core. Then, $S = S_0 \cup T_1 \cup T_2$, where
  $S_0 \subseteq \mcC(x^2)$ is core, $T_1 \subseteq \mcC(x(x-1))$, and
  $T_2 \subseteq \mcC((x-1)(x-2))$ (Proposition~\ref{Single class
    prop}\eqref{Single class prop 3}). If $S_0 = \varnothing$, then
  $T_1 \cup T_2$ is core, but not purely core. From
  Example~\ref{M_2(3) SQD example}, we know that there are
  $2\cdot 4051\cdot 16 + 12\cdot 4^2$ choices for $S$ when
  $S_0 = \varnothing$.

  For the remainder of the example, we will assume that $S_0$ is
  nonempty, which gives 243 possibilities for $S_0$. Moreover, at
  least one of $T_1$ or $T_2$ must be noncore. We have
  $\phi_S(x)=x^2(x-1)(x-2)$, and since $S_0$ is core, we know that
  $x^2$ divides each polynomial in $N(S)$
  (cf.~Fact~\ref{fact:Phi}). As in Example~\ref{M_2(3) SQD example},
  we will consider cases depending on $T_1$ and $T_2$, and will draw
  graphs corresponding to each case. Because $x^2$ divides every
  polynomial in $N(S)$, vertex 0 will be shaded in each graph.

\vspace{\baselineskip}

\noindent\textbf{Case 1}: $T_1$ is core, but $T_2$ is noncore\\
We must have $T_1 \ne \varnothing$, since otherwise
$x^2\cdot f \in N(S)$ for any $f \in N(T_2)$ of degree~1, and
$x^2\cdot f$ has degree less than $\phi_S$
(cf.~Fact~\ref{fact:Phi}). For this case, the graph is
\begin{center}
  \begin{tikzpicture}[scale=1,auto=left,every node/.style={circle,
      draw=black, fill=none, inner sep=0pt, minimum size=6pt}]

    \node[fill = black!30, label={[label distance=1pt]270:{0}}] (0) at (-2,0) {};
    \node[fill = black!30, label={[label distance=1pt]270:{1}}] (1) at (0,0) {};
    \node[label={[label distance=1pt]270:{2}}] (2) at (2,0) {};

    \draw (0) -- node [above,draw=none,fill=none] {$T_1$} (1);
    \draw (1) -- node [above,draw=none,fill=none] {$T_2$} (2);
  \end{tikzpicture}
\end{center}
Analogous to the argument in Example~\ref{M_2(3) SQD example} Case~1,
in order to shade vertex 2, we need $\mcL(T_2,1) = \{0\}$, and then
$\mcL(T_2,2) \ne \{0\}$ because $T_2$ is noncore
(Proposition~\ref{Single class prop}\eqref{Single class prop 3}). From
Lemma~\ref{L-module lem 3}\eqref{L-mod 3, 4}, there are $16$
possibilities for~$T_2$. Hence, there are $243\cdot 4051\cdot 16$
choices for $S$.

\vspace{\baselineskip}

\noindent\textbf{Case 2}: $T_1$ is noncore, but $T_2$ is core\\
If $T_2 = \varnothing$, then the graph is
\begin{center}
  \begin{tikzpicture}[scale=1,auto=left,every node/.style={circle,
      draw=black, fill=none, inner sep=0pt, minimum size=6pt}]
    \node[fill = black!30, label={[label distance=1pt]270:{0}}] (0) at (-2,0) {};
    \node[label={[label distance=1pt]270:{1}}] (1) at (0,0) {};

    \draw (0) -- node [above,draw=none,fill=none] {$T_1$} (1);
\end{tikzpicture}
\end{center}
Again, to shade vertex 1, we need $\mcL(T_1,0) = \{0\}$. Using
Proposition~\ref{Single class prop}\eqref{Single class prop 3} and
Lemma~\ref{L-module lem 3}\eqref{L-mod 3, 4}, when $T_2 = \varnothing$
there are 16 choices for $T_1$.

Assume now that $T_2 \ne \varnothing$. Graphically, we have
\begin{center}
  \begin{tikzpicture}[scale=1,auto=left,every node/.style={circle,
      draw=black, fill=none, inner sep=0pt, minimum size=6pt}]

    \node[fill = black!30, label={[label distance=1pt]270:{0}}] (0) at (-2,0) {};
    \node[fill = black!30, label={[label distance=1pt]270:{1}}] (1) at (0,0) {};
    \node[fill = black!30, label={[label distance=1pt]270:{2}}] (2) at (2,0) {};

    \draw (0) -- node [above,draw=none,fill=none] {$T_1$} (1);
    \draw (1) -- node [above,draw=none,fill=none] {$T_2$} (2);
\end{tikzpicture}
\end{center}
This means that $T_1$ could be any noncore subset of $\mcC(x(x-1))$,
and $S$ will still be core. Thus, if $T_2 \ne \varnothing$, then there
are $44$ choices for $T_1$ and $4051$ choices for $T_2$ (see
Table~\ref{Noncore table}). In total, Case 2 produces
$243\cdot 16+243\cdot 44\cdot 4051$ possibilities for~$S$.

\vspace{\baselineskip}

\noindent\textbf{Case 3}: both $T_1$ and $T_2$ are noncore\\
For this case, the initial graph is
\begin{center}
  \begin{tikzpicture}[scale=1,auto=left,every node/.style={circle,
      draw=black, fill=none, inner sep=0pt, minimum size=6pt}]

    \node[fill = black!30, label={[label distance=1pt]270:{0}}] (0) at (-2,0) {};
    \node[label={[label distance=1pt]270:{1}}] (1) at (0,0) {};
    \node[label={[label distance=1pt]270:{2}}] (2) at (2,0) {};

    \draw (0) -- node [above,draw=none,fill=none] {$T_1$} (1);
    \draw (1) -- node [above,draw=none,fill=none] {$T_2$} (2);
  \end{tikzpicture}
\end{center}
In order to shade vertex 2, we need $\mcL(T_2,1) = \{0\}$, which
forces $\mcL(T_2,2) \ne \{0\}$ by Proposition~\ref{Single class
  prop}\eqref{Single class prop 3}. The graph becomes
\begin{center}
  \begin{tikzpicture}[scale=1,auto=left,every node/.style={circle,
      draw=black, fill=none, inner sep=0pt, minimum size=6pt}]

    \node[fill = black!30, label={[label distance=1pt]270:{0}}] (0) at (-2,0) {};
    \node[label={[label distance=1pt]270:{1}}] (1) at (0,0) {};
    \node[fill = black!30, label={[label distance=1pt]270:{2}}] (2) at (2,0) {};

    \draw (0) -- node [above,draw=none,fill=none] {$T_1$} (1);
    \draw (1) -- node [above,draw=none,fill=none] {$T_2$} (2);
  \end{tikzpicture}
\end{center}
There are now two possible ways that vertex 1 could be
shaded. By~\cite[Theorem 6.6]{Werner:2022:null-ideals}, this will
occur if and only $\mcL(T_1,0) \cap \mcL(T_2,2)$ is zero.  It could be
that $\mcL(T_1,0) = \{0\}$, which leads to $\mcL(T_1,1) \ne \{0\}$ by
Proposition~\ref{Single class prop}\eqref{Single class prop 3},
yielding 16 choices each for $T_1$ and $T_2$ by Lemma~\ref{L-module
  lem 3}\eqref{L-mod 3, 4}, and $243 \cdot 16^2$ choices for
$S$. Otherwise, $\mcL(T_1,0)$ is nonzero but not equal to
$\mcL(T_2,2)$. This situation is similar to Case 3 of
Example~\ref{M_2(3) SQD example}. There are 12 ways to select the
$\mcL$-modules so that they are nonzero and not equal. Once
$\mcL(T_2,2)$ is fixed, there are $2^3-(3+1)=4$ choices for $T_2$,
since $T_2$ is contained in some $\mcB(-,2)$ and $|T_2| \geq 2$. The
set $T_1$ is likewise contained in some $\mcB(-,0)$, but $T_1$ could
be a singleton set. Hence, there are $2^3-1=7$ possibilities for
$T_1$. In total, Case 3 leads to
$243 \cdot 16^2 + 243 \cdot 12 \cdot 7 \cdot 4$ choices for $S$.

\vspace{\baselineskip}

Combining all of our work in Example~\ref{M_2(3) example}, we see that
$\mcC$ contains $244 \cdot 4052^2$ purely core and
\begin{equation*}
  243\cdot(4051\cdot 16 + 16 + 44\cdot 4051 + 16^2 + 12\cdot 7 \cdot 4)
\end{equation*}
core but not purely core subsets.
\end{example}

As $q$ grows, the number of available SQD classes also grows, and the
potential interactions among SQD classes and non-SQD classes become
increasingly complicated. The examples above demonstrate that
enumerating core sets that are not purely core is feasible in
particular situations, but producing an exact formula for the number
of such sets is difficult. Nevertheless, we feel that the techniques
and perspectives used in these examples could be applicable for the
study of core sets and null ideals over $M_n(\F_q)$ with $n \geq 3$,
or over other rings or algebras.


\bibliographystyle{amsplainurl}
\bibliography{bibliography}

\end{document}